\newtheorem{theorem}{Theorem}[section]
\newtheorem{lemma}[theorem]{Lemma}
\newtheorem{prop}[theorem]{Proposition}
\newtheorem{cor}[theorem]{Corollary}
\newtheorem{proposition}[theorem]{Proposition}
\numberwithin{equation}{section}
\def\Vightarrow#1{\smash{\mathop{\to}\limits^{#1}}}
\begin{document}

\title[Rank and deficiency gradients of Thompson groups]{Rank and deficiency gradients of generalised Thompson groups of type $F$}

\author{Dessislava H. Kochloukova}

\begin{abstract} For an arbitrary sequence $(G_s)$ of subgroups of finite index in the generalised Thompson group $$F_{n, \infty} = \langle x_0, x_1, \ldots , x_m, \ldots \mid x_i^{x_j} = x_{i+ n-1} \hbox{ for } i > j \geq	 0 \rangle$$
it is shown that $\sup_{s \geq 1} d(G_s) < \infty$ and that the deficiency gradient of $F_{n, \infty}$ with respect to $(G_s)$  is 0 provided $[G : G_s]$ tends to infinity. A higher dimensional analogue is considered for $n = 2$.
\end{abstract} 

\maketitle

\section{Introduction}
In this paper we apply methods from $\Sigma$ theory to study the minimal number of generators  and the deficiency of subgroups of finite index in the generalized Thompson groups of type $F$. 
The generalised Thompson groups are defined by the infinite presentation 
$$F_{n, \infty} = \langle x_0, x_1, \ldots , x_m, \ldots \mid x_i^{x_j} = x_{i+ n-1} \hbox{ for } i > j \geq	 0 \rangle,$$ where $n \geq 2$. In the case $n = 2$ we get the classical Richard Thompson group $F$. In is not difficult to see that $F_{n, \infty}$  is finitely presented but actually much more holds i.e.     Brown and Geoghegan showed in \cite{BG} that $F$ has type $FP_{\infty}$  and later Brown proved in \cite{Brown2} that the same holds  for the group $F_{n, \infty}$. Furthermore it was shown in \cite{Brown2}  that
 the commutator subgroup $F_{n, \infty}'$ is simple and hence any subgroup of finite index in $F_{n, \infty}$ contains the commutator and hence is normal in $F_{n, \infty}$. Thought it is still an open problem whether the group $F$ is amenable it was  shown by Brin and Squier that $F$ does not contain non-cyclic free subgroups \cite{B-S}. The same holds for $F_{n, \infty}$ ( note that  $F_{n,\infty}$ embeds in $F$ and $F$ embeds in $F_{n, \infty}$).

The group $F_{n, \infty}$ has a presentation as a group of PL transformations of a closed interval \cite{essay} or the real line \cite{BrinGuzman}. 
In the classical case ($n = 2$) the automorphism group of $F = F_{2, \infty}$ was described by Brin \cite{Brin}
  but in the case $n \geq 3$ as shown by Brin and Guzm\'an $Aut(F_{n, \infty})$ does not behave as $Aut(F)$ i.e. $Aut(F_{n, \infty})$  is wild in the sense that not every automorphism of $F_{n, \infty}$ comes from conjugation with a PL automorphism of an interval (respectively the real line if the presentation uses the real line and not a closed interval).

Let $H$ be a finitely generated group. A chain  $( H_s  )$ in $H$ is a sequence $\ldots < H_{s+1} < H_s < \ldots < H_1 = H$ 
 of subgroups of finite index in $H$  such that $H_{s+1}$ is a proper subgroup of $H_s$ for every $s$. In \cite{Lackenby}  M. Lackenby  defined the rank gradient of $H$ with respect to the chain $(H_s)$ as
$$
RG(H, ( H_s )) = d(H_s) - 1 / [H : H_s],
$$ 
where $d(H_s)$ is the minimal number of generators of $H_s$.
For a finitely presented infinite amenable group $H$  and for any chain $(H_s)$ Ab\'ert, Jaikin-Zapirain and Nikolov  showed  that  $RG(H, ( H_s )) = 0$ \cite{ANJ} but as we mentioned before  
for the generalized Thompson groups $F_{n, \infty}$ it is still an open problem whether there are amenable. Note that as any chain of finite index subgroups  in $F_{n, \infty}$ is normal, condition 1) from \cite[Thm.~3]{ANJ} is equivalent to property $(\tau)$ and as the abelianization $\mathbb{Z}^n$ of $F_{n, \infty}$  does not have property ($\tau$) with respect to any  chain we have that $F_{n, \infty}$ does not have property $(\tau)$. Furthermore
$F_{n, \infty}$ is not virtually a free product with amalgamation ($F_{n, \infty}$ does not contain free non-cyclic subgroups). Then by \cite[Thm.~3]{ANJ} for any chain $(G_s)$ in $F_{n, \infty}$ we have $RG(F_{n, \infty}, (G_s)) = 0$.
Our first result generalizes the fact that the rank gradient of $F_{n, \infty}$ with respect to any chain is 0.

\medskip
{\bf Theorem A1 } {\it Let $\mathcal A$ be the set of all subgroups of finite index in $F_{n, \infty}$. Then
$sup_{H \in {\mathcal A}} \ d(H) < \infty$.}

\medskip
 The proof of Theorem A1 relies on the fact that there are only two subgroups $G' < K < G$ such that $G / K \simeq \mathbb{Z}$ and $K$ is not finitely generated. Furthermore these two groups $K$ are conjugated by external automorphism of $G$. The classification of the finitely generated subgroups containing the commutator of a finitely generated group $H$ is given by the Bieri-Neumann-Renz-Strebel invariant $\Sigma^1(H)$, discussed in details in the preliminaries. The invariant $\Sigma^1(H)$ is part of a sequence of invariants $\{ \Sigma^i(H) \}_{
i\leq m}$ defined for groups $H$ of homotopical type $F_m$. In the case of groups $H$ of PL transformations of a closed interval under some mild conditions on $H$ the invariant $\Sigma^1(H)$ was calculated by Bieri, Neumann, Strebel in \cite{BNS}. In particular this gives a complete description of $\Sigma^1(F_{n,\infty})$. The invariant $\Sigma^m(F)$ for arbitrary $m \geq 2$ was calculated by Bieri, Geoghegan, Kochloukova in \cite{BGK} and $\Sigma^2(F_{n, \infty})$ was calculated for an arbitrary $n \geq 3$ by Kochloukova in \cite{K}.

In the case $n = 2$  Theorem A1 generalizes to higher dimensions. We do not know whether a version of Theorem A2 holds for $F_{n, \infty}$. The proof of Theorem A2 uses significantly that $F/ F'$ has small torsion-free rank.

\medskip
{\bf Theorem A2} {\it Let $H$ be a subgroup of finite index in $F$. Then there is a $K(H, 1)$ complex with $r(H, j)$ cells in dimension $j$ such that  $r(H, j) \leq 8j - 4$ for $j \geq 3$, $r(H,2) = 12, r(H, 1) = 5 $  and $r(H, 0) = 1$.}

\medskip

For a finitely presented group $H$ the deficiency gradient associated to a chain $(H_s)$ in $H$ is
$$
DG(H, ( H_s ) ) = \lim_{s \to \infty} def (H_s) / [ H : H_s],
$$
where $def(H_s)$ is the deficiency of $H_s$. Here we fix the deficiency of a finitely presented group as the suprimum of the number of generators minus the number of relations over all possible finite presentations of the group.
In the case when every $H_s$ is normal and $\cap_s H_s = 1$ the deficiency gradient  was defined in \cite{BK}.  Recently the same invariant was considered by Abert and Gaboriau in 
\cite{AG} and by Kar and Nikolov in \cite{KN}. In \cite{KN} Kar and Nikolov show that for every finitely presented residually finite amenable group  $H$ and for any normal chain $(H_s)$ we have $DG(H, (H_s)) = 0$. 
Though $F_{n, \infty}$ is not residually finite and we do not know whether it  is amenable by the following theorem the same property holds for $F_{n,\infty}$. Furthermore the condition on inclusions in the definition of a chain  is relexed by assuming only that the index goes to infinity.

\medskip
{\bf Theorem B} {\it Let $( G_s )$ be a sequence of subgroups of finite index in $G = F_{n, \infty}$ such that the index $[G : G_s]$ tends to infinity when $s$ tends to infinity. Then $DG(G,  ( G_s )) = 0$.}

\medskip
Finally we define a new invariant $\chi_m(H)$ that behaves as a partial Euler characteristic and set for a sequence $(H_s)$ of subgroups of finite index in  $H$  $m$-dimensional  gradient by 
$$
\chi_mG(H, ( H_s ) ) = \lim_{s \to \infty} \chi_m (H_s) / [ H : H_s].
$$
By definition for a group $H$ of type $F_m$ we set
$$\chi_m(H) =   inf \{ \sum_{0 \leq i \leq m} (-1)^{m - i} \alpha_i\}$$
where the infimum is over all 
 $K(H,1)$ CW-complexes where $\alpha_j$ is the number of cells in dimension $j \leq m$. We show in
Section \ref{final0} that  for $H$ a subgroup of finite index in  $F_{n, \infty}$ we have $
\sum_{0 \leq i \leq m} (-1)^{m - i} \alpha_i \geq 0$ for every $m$.

\medskip
{\bf Theorem C} {\it Let $ (G_s )$ be a sequence of subgroups of finite index in $G = F$ such that the index $[G : G_s]$ tends to infinity when $s$ tends to infinity.
Then $\chi_mG(F, (G_s)) = 0$.}

\medskip
Theorem B is a corollary of Proposition \ref{sigma} and Theorem C is a corollary of Theorem A2, as well is a corollary of Propostion \ref{sigma}. The condition (\ref{sigmacondition})  on $\Sigma^m(G)$ in the statement of Propostion \ref{sigma} i.e.
\begin{equation} \label{nnn}
\Sigma^m(G) = S(G) \setminus  conv_{\leq 2} \{ [\chi_1], [\chi_2] \},\hbox{ where } \Sigma^1(G) = S(G) \setminus \{ [\chi_1], [\chi_2] \} 
\end{equation}
holds in some particular cases (see the preliminaries on Sigma invariants). We strongly believe that (\ref{nnn}) holds for general $m$ but this is open for now. If (\ref{nnn})  holds for $G = F_{n, \infty}$  then Theorem C holds for $G = F_{n, \infty}$.

\medskip
Acknowledgements : I would like to thank  A. Kar and N. Nikolov for sending their preprint \cite{KN}  and  N. Nikolov and M. Abert for the encouragement to start writing this paper during the conference ``Golod-Shafarevich Groups and Algebras, and the Rank Gradient", Vienna 2012 .

\section{Preliminaries on $\Sigma$ theory}

The invariant  $\Sigma^m(H)$ is defined for any group $H$ of homotopical type $F_m$ i.e. there is a $K(H, 1)$ with finite $m$-skeleton.  By definition $\Sigma^m(H)$ is a subset of the character sphere $S(H)$, where $S(H) = Hom (H, \mathbb{R}) \setminus \{ 0 \} / \sim$ and $\sim$ is the equivalence relation given by $\chi_1 \sim \chi_2$ if and only if there is a positive real number $r$ such that $\chi_1 = r \chi_2$. We write $[\chi]$ for the equivalence class of $\chi$.  

Let $\Gamma$ be the $m$-skeleton of the universal cover of some  $CW$-complex that is a $K(H,1)$ with finite $m$-skeleton and one vertex, thus the set of vertices of $\Gamma$ is $H$. In the case $m = 1$ $\Gamma$ is the Cayley graph of $H$ with respect to a finite generating set. We consider a version of the Cayley graph and its higher dimensional analogies where $H$ acts freely on the left. Thus in this paper all modules and group actions (if not otherwise stated) are left ones.  

Let $\chi : H \to \mathbb{R}$ be a non-zero homomorphism. Define $\Gamma_{\chi \geq r}$ as the $CW$-subcomplex of $\Gamma$ spanned by the vertices in $H_{\chi \geq r} = \{ g \in H | \chi(g) \geq r \}$. Note that for $d \leq 0$ the inclusion map $\Gamma_{\chi \geq 0} \to \Gamma_{\chi \geq d}$ induces a map  $\pi_{i}(\Gamma_{\chi \geq 0}) \to \pi_{i}(\Gamma_{\chi \geq d}) $. By definition
$$
\Sigma^m(H) = \{ [\chi] \in S(H) | \hbox{ there is } d = d_{\chi}  \leq 0 $$ $$ \hbox{ such that the map } \pi_{i}(\Gamma_{\chi \geq 0}) \to \pi_{i}(\Gamma_{\chi \geq d}) \hbox{ is trivial } \hbox{ for all } i \leq m-1 \}.
$$
The case $m = 1$ is special, in this case for $[\chi] \in \Sigma^1(H)$ the constant $d_{\chi}$ can be chosen always 0 i.e. $\Gamma_{\chi \geq 0}$ is connected.

\begin{theorem} \label{preli1} \cite{BieriRenz}, \cite{Renz}   Let $H$ be a group of type $F_m$ (resp. $FP_m$) and $K$ be a subgroup of $H$ that contains the commutator. Then $K$ is of type $F_m$ (resp. $FP_m$) if and only if for every non-zero homomorphism $\chi : H \to \mathbb{R}$ such that $\chi(K) = 0$ we have $[\chi] \in \Sigma^m(H)$ (resp. $[\chi] \in \Sigma^m(H, \mathbb{Z})$).
\end{theorem}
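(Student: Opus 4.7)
The plan is to handle the homotopical ($F_m$) version in detail; the homological ($FP_m$) version runs in parallel with homotopy groups replaced by homology. I would begin by choosing a $K(H,1)$ complex $Y$ with finite $m$-skeleton (which exists by the $F_m$ hypothesis) and setting $\tilde Y$ to be its universal cover, so that $\Gamma = \tilde Y^{(m)}$ realises the complex appearing in the definition of $\Sigma^m(H)$. Since $K \supseteq H'$, the quotient $A = H/K$ is a finitely generated abelian group, and a character $\chi \colon H \to \BR$ vanishes on $K$ if and only if it factors through $A$; the corresponding classes $[\chi]$ form a great subsphere $S(H,K) \subseteq S(H)$, which is the subsphere appearing in the hypothesis.

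Next I would translate the finiteness of $K$ into a geometric statement about $\overline Y = \tilde Y/K$, on which $A$ acts freely. The key reformulation is that $K$ has type $F_m$ if and only if there is an $A$-cocompact subcomplex $X \subseteq \overline Y$ whose inclusion is $(m-1)$-connected; indeed the lift of such an $X$ to $\tilde Y$, divided by $K$, can be completed to a $K(K,1)$ with finite $m$-skeleton. The forward direction of the theorem is the easy one: given a finite $m$-skeleton for a $K(K,1)$, bound the $\chi$-variation on a fundamental domain for $K$ and use it to push spherical images in $\Gamma_{\chi \geq 0}$ down into $\Gamma_{\chi \geq d}$ for a uniform $d = d_\chi$, verifying $[\chi] \in \Sigma^m(H)$ for every $\chi$ with $\chi(K)=0$.

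The converse is the hard part, and I expect it to be the main obstacle. Assuming $S(H,K) \subseteq \Sigma^m(H)$, I need to manufacture the cocompact subcomplex $X$ from a pointwise condition on the sphere $S(H,K)$. The strategy would be: first exploit the fact that $\Sigma^m(H)$ is an open subset of $S(H)$ (a classical property of the invariant), so that each $[\chi] \in S(H,K)$ has an open neighbourhood $U_{[\chi]}$ in $S(H)$ over which a uniform $d$ witnesses membership in $\Sigma^m$; then use compactness of $S(H,K)$ to extract a finite subcover. The remaining step is to glue the local fillings coming from each $U_i$ into a single $A$-cocompact subcomplex; this is where one needs either the valuation-theoretic machinery of Bieri--Strebel or an induction on the rank of $A$ that reduces to the rank-one case. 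The rank-one reduction proceeds by choosing a codimension-one character $\psi$ trivial on a corank-one subgroup $K \subseteq K^* \subseteq H$, applying the inductive hypothesis to $K^*$, and then running a Mayer--Vietoris / mapping-cone argument along the $\psi$-direction to descend from $K^*$ to $K$. Verifying that the resulting complex is genuinely $(m-1)$-connected (and not merely finite-dimensional) is the delicate point and is the technical heart of the Bieri--Renz theorem.
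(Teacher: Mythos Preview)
The paper does not supply a proof of this statement: Theorem~\ref{preli1} is quoted in the preliminaries with citations to \cite{BieriRenz} and \cite{Renz}, and is used as a black box throughout (in the proofs of Theorem~A1 and Proposition~\ref{sigma}). So there is no ``paper's own proof'' to compare against.

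That said, your outline is a reasonable sketch of the actual Bieri--Renz argument. The reformulation of ``$K$ has type $F_m$'' in terms of an $A$-cocompact, $(m-1)$-connected subcomplex of $\tilde Y/K$ is the right starting point, and the openness-plus-compactness strategy for the converse is indeed how the proof goes. You are also honest that the gluing step---assembling the finitely many local fillings over a cover of $S(H,K)$ into a single cocompact, highly connected subcomplex---is the technical core, and you do not actually carry it out. In the original references this is done via a careful valuation-on-chain-complexes argument (the ``essential connectivity'' machinery of Bieri--Renz), not quite by the rank-induction you describe, though the two are related. If you want a self-contained write-up you would need to either reproduce that machinery or give the full details of the inductive descent; as written, what you have is a correct road map rather than a proof.
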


In the case of groups of PL transformations of a closed interval under some mild conditions on the group the invariant $\Sigma^1$ was calculated in \cite{BNS}.
Applying this to the generalised Thompson groups we have

\begin{theorem}  \label{preli2} \cite[Prop.~9]{K} For $G = F_{n, \infty}$ we have $\Sigma^1(G) = S(G) \setminus \{ [\chi_1], [\chi_2] \}$, where $\chi_1(x_0) =  - 1, \chi_1(x_i) = 0$ for $i \geq 1$ and $\chi_2(x_i) = 1$ for all $i \geq 0$.
\end{theorem}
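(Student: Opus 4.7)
The strategy is to apply the Bieri--Neumann--Strebel computation of $\Sigma^1$ for groups of PL homeomorphisms of a compact interval \cite{BNS} to the known PL realization of $F_{n,\infty}$ from \cite{essay}, \cite{BrinGuzman}.

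First I would identify the abelianization. Abelianizing the relations $x_i^{x_j} = x_{i+n-1}$ (taking $j = 0$ and $i \geq 1$) gives $x_i = x_{i+n-1}$ for every $i \geq 1$, and the remaining relations add nothing new. Hence the images of $x_0, x_1, \dots, x_{n-1}$ form a $\mathbb{Z}$-basis of $G/G' \cong \mathbb{Z}^n$ and $S(G) \cong S^{n-1}$, so the formulas for $\chi_1, \chi_2$ define unambiguous characters.

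Next I would fix the standard PL realization $\rho : F_{n,\infty} \hookrightarrow \mathrm{PL}^+[a,b]$, with breakpoints in $\mathbb{Z}[1/n]$ and slopes in $n^{\mathbb{Z}}$, and invoke the BNS theorem: for such PL groups, $S(G) \setminus \Sigma^1(G)$ is contained in the two-point set $\{[\lambda_a], [\lambda_b]\}$, where $\lambda_a(g) = \log_n \rho(g)'(a^+)$ and $\lambda_b(g) = \log_n \rho(g)'(b^-)$ are the endpoint-slope characters. Computing slopes in the standard realization, in which $x_0$ has slope $1/n$ near $a$ and slope $n$ near $b$, while each $x_i$ with $i \geq 1$ is the identity on a neighbourhood of $a$ and has slope $n$ near $b$, identifies $\lambda_a$ with $\chi_1$ and $\lambda_b$ with $\chi_2$ on the basis $x_0, \dots, x_{n-1}$, so $S(G) \setminus \Sigma^1(G) \subseteq \{[\chi_1], [\chi_2]\}$. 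For the reverse inclusion I would verify that $\ker \chi_i$ is not finitely generated, by exhibiting an ascending chain of proper finitely generated subgroups whose union is $\ker \chi_i$, built from the PL normal form by pushing the supports of elements arbitrarily close to the relevant endpoint; by Theorem \ref{preli1} this forces $[\chi_i] \notin \Sigma^1(G)$.

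The main obstacle is the bookkeeping in the last step: one must track the slopes at both endpoints for the entire infinite family $\{x_i\}_{i \geq 0}$ and check that the two resulting characters descend to exactly $\chi_1$ and $\chi_2$ on the $n$-dimensional abelianization. A secondary subtlety is that the statement is one-sided, since only the directions $[\chi_1], [\chi_2]$ are excluded from $\Sigma^1(G)$ while their antipodes $[-\chi_1], [-\chi_2]$ are not, so one must carefully distinguish $[\chi]$ from $[-\chi]$ throughout rather than working with the symmetrised set of $\chi$ with finitely generated kernel.
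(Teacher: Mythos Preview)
The paper does not prove this statement: it is quoted as a preliminary result from \cite[Prop.~9]{K}, preceded only by the remark that for PL groups of a closed interval $\Sigma^1$ was computed in \cite{BNS} and that applying this to $F_{n,\infty}$ gives the theorem. Your proposal is precisely that route, so it agrees with what the paper alludes to.

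Two small comments on the details. First, once you invoke the BNS theorem in full strength (for an irreducible group of PL homeomorphisms of a compact interval one has exactly $\Sigma^1(G)=S(G)\setminus\{[\lambda_a],[\lambda_b]\}$, not merely an inclusion), the separate verification that $\ker\chi_i$ fails to be finitely generated is redundant. Second, your backup argument via Theorem~\ref{preli1} would not by itself nail down the sign: knowing that $\ker\chi_i$ is not finitely generated only tells you that at least one of $[\chi_i]$, $[-\chi_i]$ lies outside $\Sigma^1(G)$, so you would still need the BNS identification of the endpoint characters to conclude that it is $[\chi_i]$ rather than its antipode. Your slope computations for $x_0,\dots,x_{n-1}$ in the standard PL model are correct and do give $\lambda_a=\chi_1$, $\lambda_b=\chi_2$.
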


This was extended to dimension 2 in \cite{K} and to arbitrary dimension for $n = 2$ in \cite{BGK}.

\begin{theorem}  \label{preli3} \cite[Thm.~A]{K}, \cite[Thm.~A]{BGK} For $G = F_{n, \infty}$ we have $$\Sigma^2(G) = S(G) \setminus conv_{\leq 2} \{ [\chi_1], [\chi_2] \},$$ where $\chi_1(x_0) =  - 1, \chi_1(x_i) = 0$ for $i \geq 1$ and $\chi_2(x_i) = 1$ for all $i \geq 0$, where $conv_{\leq 2} S$ denotes the spherical convex hull of at most 2 elements in $S \subset S(G)$. 

In the case $n = 2$ we have for $m \geq 2$
$$
\Sigma^m(F) = \Sigma^2(F) =  S(F) \setminus conv_{\leq 2} \{ [\chi_1], [\chi_2] \}.
$$
\end{theorem}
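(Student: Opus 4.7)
The plan is to model $G = F_{n,\infty}$ geometrically via its cocompact, properly discontinuous action on a contractible $CW$-complex $X$, for example the one constructed in \cite{Brown2}, and to apply equivariant discrete Morse theory with a non-zero character $\chi$ playing the role of a height function. By standard arguments, membership $[\chi] \in \Sigma^m(G)$ reduces to the assertion that the descending links in $X$ at sufficiently low $\chi$-levels are $(m-2)$-connected, converting the sphere-level statement into a purely combinatorial question about these links.

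For the first equality, the two endpoints $[\chi_1], [\chi_2]$ are excluded from $\Sigma^2(G)$ since $\Sigma^2(G) \subseteq \Sigma^1(G)$ and they are already absent from $\Sigma^1(G)$ by Theorem \ref{preli2}. For an interior point of the arc, say $[\chi] = [a_1\chi_1 + a_2\chi_2]$ with $a_1, a_2 > 0$, I would apply Theorem \ref{preli1} in its homological form: choose a subgroup $K \supseteq G'$ with $\chi(K) = 0$ and argue directly from the infinite presentation of $F_{n,\infty}$ that $K$ fails to be of type $FP_2$. Dually this amounts to producing a $\chi$-essential spherical $1$-cycle in some descending link of $X$ that does not bound in any deeper sublevel.

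For the opposite inclusion, when $[\chi]$ lies off the arc, in every cell of $X$ at least one leaf of each polarity remains available in the descending link; a poset-theoretic argument in the spirit of Bestvina--Brady then yields $1$-connectivity and hence $[\chi] \in \Sigma^2(G)$. For the second equality (the case $n = 2$), the descending links for $G = F$ admit additional combinatorial structure and one proves by induction on $m$ that they are $(m-2)$-connected for every $m \geq 2$, which is the most delicate step in \cite{BGK} and is precisely why $\Sigma^m(F) = \Sigma^2(F)$ for all $m \geq 2$. The main obstacle, and the reason the analogue for $n \geq 3$ remains open as noted after Theorem C, is obtaining the sharp higher connectivity estimate on these descending links when $n \geq 3$; the case $n = 2$ is forced by the low-dimensional combinatorics of pairs of binary forests, which does not generalise obviously.
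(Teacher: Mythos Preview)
The paper does not prove this theorem at all: Theorem~\ref{preli3} is stated in the preliminaries with explicit citations to \cite[Thm.~A]{K} and \cite[Thm.~A]{BGK}, and no argument is given in the present paper. So there is no ``paper's own proof'' to compare your attempt against.

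A brief remark on your sketch relative to the cited sources. The Morse-theoretic framework you outline---a contractible $G$-complex with cocompact action, a height function coming from $\chi$, and reduction to connectivity of descending links---is indeed the architecture of \cite{BGK} for $n=2$, so your high-level plan is on the right track. One step in your outline is not logically sound as written: for an interior point $[\chi]=[a_1\chi_1+a_2\chi_2]$ you propose to pick $K\supseteq G'$ with $\chi(K)=0$ and show $K$ is not $FP_2$. But Theorem~\ref{preli1} only yields that \emph{some} character vanishing on $K$ lies outside $\Sigma^2(G,\mathbb{Z})$, not that $[\chi]$ itself does. To pin down $[\chi]$ you would additionally need to know that $[-\chi]\in\Sigma^2(G)$ (and then use openness for irrational points), or---as is actually done in \cite{BGK} and \cite{K}---argue directly at the level of the complex rather than via Theorem~\ref{preli1}. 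Your description of the positive direction and of the higher-$m$ step for $n=2$ is an accurate summary of what \cite{BGK} does, though of course the connectivity analysis of the descending links is where all the work lies.
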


Remark. Here we apply  $conv_{\leq 2} S$ for a set $S$ with 2 elements, so $conv_{\leq 2}$ is the spherical convex hull. 

\section{Preliminaries on $Aut(F_{n,\infty})$} \label{automorphisms}

By \cite[Section~3]{K} there are $\varphi , \mu \in Aut(G)$ such that $$\varphi(x_0) = x_0, \varphi(x_i) = x_{i+1} \hbox{ for }i \geq 1$$
and
$$ \mu(x_0) = x_0^{-1},
\mu(x_i) = \varphi^{-2i-1} (x_i) x_0^{-1} \hbox{ for } i \geq 1.$$
Note that $\mu$ has order 2 and $\varphi$ has infinite order. The automorphism $\mu$ of $G$  is given by conjugation with the map $t \to 1- t$ if $G$ is presented as a group of PL homeomorphisms of the interval $[0,1]$ (alternatively if as in \cite{BrinGuzman} $G$ is presented as a group of PL homeomorphisms of the real line then $\mu$ is given by conjugation with the map $t \to - t$) and
\begin{equation} \label{ovo40}
\chi_2 = \chi_1 \mu : G \to \mathbb{R}
\end{equation}

The automorphism  $\varphi$ induces a map $\varphi_0 : G/ G' \to G / G'$ such that $\varphi_0(x_0) = x_0$, $\varphi_0(x_i)  = x_{\rho_0(i)}$ for $ 1 \leq i \leq n-1$ and $\rho_0$ is the cyclic permutation $(1,2, \ldots, n-1)$. The  automorphism  $\mu$ induces a map $\mu_0 : G/ G' \to G / G'$ such that $\mu_0(x_0) = x_0^{-1}$ and $\mu_0(x_i) = x_{\delta(i)} x_0^{-1}$ for $ 1 \leq i \leq n-1$, with $\delta(i) = \rho_0^{-2i-1}(i) = \rho_0^{-i-1} (\rho_0^{-i} (i)) = \rho_0^{-i-1}(n-1)$. Hence $\delta$ exchanges $i$ with $n-i-2$ for $1 \leq i \leq n-3$ and exchanges $n-1$ with $n-2$.

Let  $\widetilde{\varphi}$ be the element of $End_{\mathbb Z}(Hom(F_{n, \infty}, \mathbb{R}))$ induced by $\varphi$ i.e. $\widetilde{\varphi} (\chi) = \chi \varphi$. We fix an isomorphism between  $End_{\mathbb Z}(Hom(F_{n, \infty}, \mathbb{R}))$ and $\mathbb{R}^n$ sending $\chi$ to  $(\chi(x_0), \chi(x_1),$ $ \ldots, \chi(x_{n-1}) )$.
 Then $\widetilde{\varphi}$ has a matrix
\[  A = \begin{pmatrix} 1  &  0  & 0 & 0 & \cdots &   0 & 0 \\
                    0 &0&1& 0 & \cdots &0 & 0\\
                    0 &0& 0& 1 & \cdots &  0 &0  \\
                    0 & 0 & 0 & 0 & \cdots & 0 &0 \\
                    \cdots & \cdots  & \cdots & \cdots   & \cdots & \cdots \\
                    0 & 0 & 0 & 0 & \cdots & 0 &1 \\
                    0& 1 &0& 0&\cdots &0 &0 \end{pmatrix}\] 
Let $\widetilde{\mu}$ be the element of $End_{\mathbb Z}(Hom(F_{n, \infty}, \mathbb{R}))$ induced by $\mu$ i.e. $\widetilde{\mu} (\chi) = \chi \mu$. 
Then by the description of $\mu_0$ we get that $\widetilde{\mu}$ 
 has a matrix             
\[  C = \begin{pmatrix} -1  &  0  & 0 &\cdots  &0 & 0 &   0 & 0 \\
                    -1 &0& 0 &\cdots  &0 & 1&0 & 0\\
                     -1 &0& 0 & \cdots & 1 & 0 &  0 &0  \\
                    \cdots & \cdots  & \cdots & \cdots & \cdots   & \cdots & \cdots \\     
-1 & 0 & 1 & \cdots  &0 & 0 & 0 &0 \\
-1 & 1 & 0 & \cdots &0 & 0 & 0 &0 \\
                    -1 & 0 & 0 & \cdots  & 0 & 0 & 0 &1 \\
                    -1& 0 &0& \cdots & 0  &0 &1 &0 \end{pmatrix}\] 

Note that $\widetilde{\mu}$ has order 2 and $\widetilde{\varphi}$ has order $n$.
Let $\Gamma$ be  the  subgroup of $Aut(G)$ generated by $\varphi$ and $\mu$
and let $D$ be the subgroup of $Aut(Hom(G, \mathbb{R}))$ induced by $\Gamma$, thus $D = \langle \widetilde{\varphi}, \widetilde{\mu} \rangle$.

\begin{proposition} \cite[Prop.~6]{K} Let $G = F_{n, \infty}$. Then
  $D$   induces a group of permutations of $S(G)$ that permutes the elements of  $X$,  where $X$ is one of the following sets : $\Sigma^m(G)$,   $\Sigma^m(G)^c$, $\Sigma^m(G, \mathbb{Z})$ and $\Sigma^m(G, \mathbb{Z})^c$. 
\end{proposition}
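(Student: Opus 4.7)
The plan is to prove the stronger invariance statement: for any group $H$ of type $F_m$ and any $\alpha \in \mathrm{Aut}(H)$, the map $[\chi] \mapsto [\chi \circ \alpha]$ on $S(H)$ preserves $\Sigma^m(H)$, and similarly it preserves $\Sigma^m(H, \mathbb{Z})$. Applied to $\alpha \in \{\varphi, \varphi^{-1}, \mu\}$ this shows that $D$ permutes both $\Sigma^m(G)$ and $\Sigma^m(G, \mathbb{Z})$; since a permutation of $S(G)$ automatically permutes complements, the four listed cases follow at once.

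First I would dispose of the formalities: $\alpha \mapsto \widetilde\alpha$ defines a left action of $\mathrm{Aut}(H)$ on $\mathrm{Hom}(H, \mathbb{R})$ by linear automorphisms which commute with positive scaling, so it descends to a permutation action on $S(H)$. This is immediate from $\widetilde{\alpha}(\chi) = \chi \circ \alpha$ and the fact that $\alpha$ is invertible.

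The main step is the invariance of $\Sigma^m(H)$. Fix a $K(H,1)$ CW-complex $Y$ with one vertex and finite $m$-skeleton, and let $\Gamma$ be the $m$-skeleton of its universal cover, as in the definition. Because $Y$ is aspherical, the automorphism $\alpha$ is realised by a based cellular self-homotopy-equivalence of $Y$; lifting to $\widetilde Y$ and restricting to the $m$-skeleton produces an $\alpha$-equivariant cellular map $\widetilde\alpha : \Gamma \to \Gamma$ satisfying $\widetilde\alpha(g \cdot x) = \alpha(g) \cdot \widetilde\alpha(x)$, which is a homotopy equivalence sending the vertex labelled $g$ to the vertex labelled $\alpha(g)$. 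Setting $\chi' = \chi \circ \alpha$, we have $\chi'(g) \geq r$ iff $\chi(\alpha(g)) \geq r$, so $\widetilde\alpha$ carries $H_{\chi' \geq r}$ bijectively onto $H_{\chi \geq r}$. By $\alpha$-equivariance this restricts to a homotopy equivalence $\Gamma_{\chi' \geq r} \to \Gamma_{\chi \geq r}$ for every $r \in \mathbb{R}$, and these restrictions intertwine the inclusions $\Gamma_{\chi' \geq 0} \hookrightarrow \Gamma_{\chi' \geq d}$ and $\Gamma_{\chi \geq 0} \hookrightarrow \Gamma_{\chi \geq d}$. Hence the triviality of $\pi_i(\Gamma_{\chi \geq 0}) \to \pi_i(\Gamma_{\chi \geq d})$ for $i \leq m-1$ transfers verbatim to the corresponding map for $\chi'$, proving that $[\chi] \in \Sigma^m(H) \iff [\chi'] \in \Sigma^m(H)$.

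The homological invariant $\Sigma^m(H, \mathbb{Z})$ is handled identically, with $H_i$ in place of $\pi_i$ and noting that $\widetilde\alpha$ restricts to a chain homotopy equivalence on the relevant subcomplexes. The only genuinely delicate point, which I expect to be the main obstacle, is constructing the cellular $\alpha$-equivariant self-map $\widetilde\alpha$ on the same $\Gamma$; this is a standard consequence of the fact that for aspherical complexes, any automorphism of $\pi_1$ is induced by a based self-map unique up to homotopy, combined with cellular approximation to lift it to the universal cover. Once this bookkeeping is fixed, everything else is a direct transport of the defining condition.
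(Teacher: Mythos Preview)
The paper does not give a proof of this proposition; it is simply quoted from \cite[Prop.~6]{K}. So there is nothing to compare against, and your write-up must stand on its own.

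Your strategy is the right one and is essentially what lies behind the cited result, but there is a genuine gap in the step ``By $\alpha$-equivariance this restricts to a homotopy equivalence $\Gamma_{\chi'\ge r}\to\Gamma_{\chi\ge r}$.'' On vertices the lift $\widetilde\alpha$ indeed carries $H_{\chi'\ge r}$ bijectively onto $H_{\chi\ge r}$, but $\widetilde\alpha$ is only a cellular map, not a cellular isomorphism: the image of a cell of $\Gamma_{\chi'\ge r}$ is an edge-path or a singular disc that may pass through vertices of lower $\chi$-value. Concretely, if an edge of $Y$ corresponds to a generator $h$, its lift from $g$ to $gh$ maps to a path from $\alpha(g)$ to $\alpha(g)\alpha(h)$ which, when $\alpha(h)$ is a long word in the generators, can dip below level $r$. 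So $\widetilde\alpha$ does \emph{not} in general restrict to a map between the half-space subcomplexes at the same level.

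The fix is standard. Because there are only finitely many $H$-orbits of cells in dimensions $\le m$ and $\widetilde\alpha$ is $\alpha$-equivariant, there is a constant $C\ge 0$ with $\widetilde\alpha(\Gamma_{\chi'\ge r})\subseteq\Gamma_{\chi\ge r-C}$ for all $r$; the same holds for a cellular lift realising $\alpha^{-1}$. These bounded shifts are exactly what the ``there is $d=d_\chi\le 0$'' clause in the definition is designed to absorb, and a short diagram chase then transfers the triviality of $\pi_i(\Gamma_{\chi\ge 0})\to\pi_i(\Gamma_{\chi\ge d})$ to the analogous statement for $\chi'$, with a possibly different $d'$. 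Alternatively, and more cleanly, you can bypass the explicit lift entirely by invoking the fact (proved in \cite{BieriRenz}, \cite{Renz}) that $\Sigma^m(H)$ is independent of the chosen $K(H,1)$: precomposing the identification $\pi_1(Y)\cong H$ by $\alpha$ gives another admissible model, and comparing the two identifications immediately yields $[\chi]\in\Sigma^m(H)\iff[\chi\alpha]\in\Sigma^m(H)$.
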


\section{Preliminaries on $K(G, 1)$ CW-complexes}

The first lemma is an easy consequence of the Bass-Serre theory. For details see \cite{BK}.

\begin{lemma} \label{Ross}  \cite[Lemma~3.2]{BK} Let $G$ be the fundamental group of a finite graph of groups with underlying graph $\Gamma$  with vertex set $V(\Gamma)$ and edge set $E(\Gamma)$ such that for every $c \in \Gamma = V(\Gamma) \cup E(\Gamma)$  the corresponding group $G_c$ is of type $F_{m - dim(c)}$  and there is  
a $K(G_c, 1)$ complex with $r(G_c,j)$ cells in dimension $j$, where $0 \leq j \leq m -dim(c)$.
Then there is 
a $K(G,1)$ complex with $r(G, j)$ cells in dimension $ 0 \leq j \leq m$, where $r(G, j) = \sum_{c \in \Gamma}  r(G_c, j - dim(c)) $ for $0 \leq j \leq m$.
\end{lemma}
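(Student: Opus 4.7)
The plan is to realise $G$ as the fundamental group of a ``graph of spaces'' modelled on $\Gamma$, using the given $K(G_c,1)$'s as building blocks, and then simply count the cells dimension by dimension. This is the standard Bass--Serre construction, so the only real work is bookkeeping.

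First, for each vertex $v \in V(\Gamma)$ I pick a $K(G_v,1)$ complex $X_v$ with $r(G_v,j)$ cells in each dimension $j \leq m$, and for each edge $e \in E(\Gamma)$ (with endpoints $v_0(e), v_1(e)$, possibly equal) I pick a $K(G_e,1)$ complex $X_e$ with $r(G_e,j)$ cells in each dimension $j \leq m-1$. Since $G_e$ is of type $F_{m-1}$ and $G_{v_i}$ of type $F_m$, I may assume $X_e$ has finite $(m-1)$-skeleton and $X_v$ has finite $m$-skeleton, and this is all that matters if one truncates above dimension $m$ at the end. The inclusions $G_e \hookrightarrow G_{v_i(e)}$ may be realised, up to homotopy, by cellular maps $f_{e,i} : X_e \to X_{v_i(e)}$.

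Next, I form the graph of spaces
\[
X \;=\; \Bigl(\bigsqcup_{v \in V(\Gamma)} X_v \;\sqcup\; \bigsqcup_{e \in E(\Gamma)} X_e \times [0,1]\Bigr)\bigg/ \sim,
\]
where $(x,0)\in X_e\times[0,1]$ is identified with $f_{e,0}(x)\in X_{v_0(e)}$ and $(x,1)$ with $f_{e,1}(x)\in X_{v_1(e)}$. Give $X$ the obvious CW structure: it inherits all cells of each $X_v$, and for each $k$-cell $\sigma$ of $X_e$ the product $\sigma \times (0,1)$ contributes one $(k+1)$-cell. That $X$ is aspherical with $\pi_1(X)\cong G$ is the classical theorem of Scott--Wall / Bass--Serre: the universal cover is the ``tree of universal covers'' obtained by gluing copies of the $\widetilde{X}_v$ along copies of $\widetilde{X}_e\times[0,1]$ over the Bass--Serre tree of $G$, and a straightforward van Kampen and Mayer--Vietoris (or direct inductive) argument shows the universal cover is simply connected and has vanishing higher homotopy.

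Finally, counting cells of $X$ in dimension $j$:
\[
r(G,j) \;=\; \sum_{v \in V(\Gamma)} r(G_v, j) \;+\; \sum_{e \in E(\Gamma)} r(G_e, j-1),
\]
which is exactly $\sum_{c \in V(\Gamma)\cup E(\Gamma)} r(G_c, j-\dim(c))$ since $\dim(v)=0$ for vertices and $\dim(e)=1$ for edges. If only a $K(G,1)$ with the correct cell counts up to dimension $m$ is needed, one may kill higher homotopy by attaching cells in dimensions $>m$ without disturbing the counts $r(G,j)$ for $j\leq m$.

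The main (and essentially only) nontrivial ingredient is verifying that $X$ is aspherical with fundamental group $G$; this is where Bass--Serre theory is used, and it is exactly the content cited from \cite{BK}. Beyond invoking that, the argument is pure cell-counting, so no obstacle of substance remains.
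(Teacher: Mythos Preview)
Your proposal is correct and is precisely the standard graph-of-spaces argument the paper has in mind: the paper does not give a proof at all, but simply says the lemma is an easy consequence of Bass--Serre theory and refers to \cite{BK} for details. Your construction of the total space and the cell count match this exactly.
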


Corollary \ref{stackcor} is a  direct consequence of the stack construction in \cite[Chapter~7]{Rossbook} and \cite[Thm.~7.1.10]{Rossbook}. We state  first  \cite[Thm.~7.1.10]{Rossbook}.

\begin{theorem} \cite[Thm.~7.1.10]{Rossbook} Let $N \ \Vightarrow{} \ G \ \Vightarrow{\pi} \ Q$ be a short exact sequence of groups. Furthermore let $X$ be a $K(N,1)$ CW complex and 
$Z$ be a $K(Q, 1)$ CW-complex. Then there is a $K(G, 1)$ CW complex $Y$ and a cellular map $\varphi : Y \to Z$ with the following properties:

1. $\varphi$ induces the homomorphism $\pi : G \to Q$ on the level of fundamental groups;

2. $\varphi$ is a stack with fibre $X$.
\end{theorem}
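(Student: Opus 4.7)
The plan is to construct $Y$ as a stack over $Z$ with fibre $X$ by induction on the skeleta of $Z$. I would begin with $Y_0$ defined as the disjoint union of copies of $X$ indexed by the $0$-cells of $Z$, with $\varphi$ sending each copy to the corresponding vertex. Assuming that $Y_k := \varphi^{-1}(Z^{(k)})$ has been built so that the restriction of $\varphi$ to the preimage of each open $j$-cell $\mathring{e}^j$ ($j \leq k$) is the projection $\mathring{e}^j \times X \to \mathring{e}^j$, then for each $(k+1)$-cell $e$ of $Z$ with attaching map $\alpha_e : S^k \to Z^{(k)}$ I would attach a block $X \times D^{k+1}$ to $Y_k$ via a cellular lift $\widetilde{\alpha}_e : X \times S^k \to Y_k$ of $\alpha_e$. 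The projection to the second factor guarantees that the extended $\varphi$ is a stack over the enlarged skeleton.

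The main technical step is to produce these lifts. For $k = 0$ a lift amounts to choosing a pair of cellular homotopy equivalences from $X$ to the fibres over the two endpoints of the relevant $1$-cell; this is precisely where the outer action of $Q$ on $N$ encoded in the given short exact sequence gets built into $Y$. For $k \geq 1$ the existence of $\widetilde{\alpha}_e$ follows from obstruction theory: the obstructions to extending the required lift over $D^{k+1}$ live in cohomology of $S^k$ with coefficients in the higher homotopy groups of the (aspherical) fibres of the partially built stack, and these coefficient groups vanish because $X$ is a $K(N,1)$. Cellularity of $\widetilde{\alpha}_e$ can then be arranged by the cellular approximation theorem, after which the push-out attaching $X \times D^{k+1}$ along $\widetilde{\alpha}_e$ yields the next stage $Y_{k+1}$.

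Having built $Y$, I would verify the three required properties. By construction $\varphi$ is cellular and is a stack with fibre $X$, giving property (2). To identify $\pi_1(Y)$ with $G$ so that $\varphi_* = \pi$, I would apply van Kampen's theorem to the stack decomposition (equivalently, use the long exact sequence of the quasi-fibration $X \to Y \to Z$) to obtain a short exact sequence $1 \to N \to \pi_1(Y) \to Q \to 1$; the extension class matches the given one because of how the lifts over $1$-cells were chosen. Asphericity of $Y$ follows by passing to the universal cover $\widetilde{Y}$, which by the stack structure is assembled from copies of the contractible space $\widetilde{X}$ fibred over the contractible space $\widetilde{Z}$, hence is itself contractible. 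The hard part will be ensuring that the inductive choices of lifts $\widetilde{\alpha}_e$ are mutually compatible so that the resulting extension of fundamental groups genuinely realises the prescribed short exact sequence rather than merely some extension with the same kernel and quotient; the stack formalism of \cite[Chapter~7]{Rossbook} is exactly the device that organises this bookkeeping.
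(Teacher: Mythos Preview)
The paper does not give its own proof of this theorem: it is quoted verbatim as \cite[Thm.~7.1.10]{Rossbook} and used as a black box to derive Corollary~\ref{stackcor}. There is therefore nothing in the paper to compare your proposal against.

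That said, your outline is a faithful sketch of the stack construction that Geoghegan carries out in \cite[Chapter~7]{Rossbook}: the inductive attachment of blocks $X\times D^{k+1}$ over the cells of $Z$, with the lifts over $1$-cells encoding the outer action of $Q$ on $N$ and the lifts over higher cells supplied by obstruction theory using the asphericity of $X$. Your identification of the delicate point---that the choices over the $1$- and $2$-cells must be made so that the resulting extension of $\pi_1$ realises the \emph{given} short exact sequence and not merely one with the same kernel and quotient---is exactly right, and is handled in the reference by first fixing a set-theoretic section of $\pi$ and using it to dictate the gluing data. If you want to turn your sketch into a complete argument you should make that step explicit; everything else you wrote is correct in spirit and matches the source.
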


\begin{cor} \label{stackcor} Let $N \to G \to Q$ be a short exact sequence of groups with both $N$ and $Q$ of type $FP_m$. Suppose that there is a $K(N,1)$ CW-complex $X$ with $\alpha_j$ cells in dimension $j \leq m$ and there is a $K(Q,1)$ CW-complex $Y$ with $\beta_j$ cells in dimension $j \leq m$. Then there is a $K(G, 1)$ complex with $\sum_{0 \leq i \leq j} \alpha_i \beta_{j-i}$ cells in dimension $j \leq m$.
\end{cor}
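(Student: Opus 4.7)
The plan is to invoke the preceding theorem and then do a cell count dimension by dimension. Given the short exact sequence $N \to G \to Q$ together with the CW-complexes $X$ (a $K(N,1)$ with $\alpha_j$ cells in dimension $j \le m$) and $Y$ (a $K(Q,1)$ with $\beta_j$ cells in dimension $j \le m$), the preceding theorem furnishes a $K(G,1)$ CW-complex, call it $W$, together with a cellular map $\varphi : W \to Y$ inducing the projection $G \to Q$ on fundamental groups, with the crucial extra information that $\varphi$ is a stack with fibre $X$.

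By the defining property of a stack (as developed in Chapter 7 of \cite{Rossbook}), the preimage of each open $k$-cell of $Y$ under $\varphi$ carries a product cell structure of the form $e^k \times X$. Consequently the cells of $W$ are in bijection with pairs $(\sigma,\tau)$, where $\sigma$ is a cell of $Y$ and $\tau$ is a cell of $X$, and the dimension of the associated cell of $W$ equals $\dim \sigma + \dim \tau$.

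To count the $j$-cells of $W$ for a fixed $j \le m$, I would sum over $k = 0,1,\ldots,j$ the number of $k$-cells of $Y$ times the number of $(j-k)$-cells of $X$. Since both $k$ and $j-k$ lie in the range $[0,m]$, the hypothesis on the cell structures of $X$ and $Y$ applies, and the total number of $j$-cells of $W$ is
$$
\sum_{k=0}^{j} \beta_k \alpha_{j-k} \; = \; \sum_{0 \le i \le j} \alpha_i \beta_{j-i},
$$
which is exactly the claimed count. There is essentially no obstacle to overcome: the entire content sits in the stack theorem quoted just above, and the corollary is a matter of transcribing the product cell structure of the fibration into a convolution-type sum of cell numbers. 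If anything subtle arises, it is only the remark that one should truncate the stack construction to the $m$-skeleton (which is harmless, since products of cells of dimension $\le m$ suffice to realise the $m$-skeleton of $W$), but no separate argument is required beyond the quoted result.
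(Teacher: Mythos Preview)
Your proposal is correct and follows exactly the approach indicated in the paper: the corollary is stated there as a direct consequence of the stack construction and \cite[Thm.~7.1.10]{Rossbook}, with no further argument given, and you have simply spelled out the obvious cell-count that this entails.
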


\section{A simple lemma}

\begin{lemma} \label{ovoprincipal} Let $G = F_{n, \infty}$ and $M = \langle x_1, x_2, \ldots , x_n \rangle$. For every non-zero homomorphism  $\chi : G \to {\mathbb R}$ define ${\mathcal A}_{\chi}$ as the set of all subgroups of finite index in $G$ that contain $Ker (\chi)$. Let $H \in {\mathcal A}_{\chi}$ and $\alpha$ be the smallest positive integer such that $x_0^{\alpha} \in H$. Define $ B = \langle x_0^{\alpha}, \cup_{j \in \mathbb{Z}} (H \cap M)^{x_0^j} \rangle$. Then

a) $B$ is an HNN extension with stable letter $x_0^{\alpha}$ and a base group $T = H \cap M $ and associated subgroup
$T $ and $T^{x_0^{\alpha}}$;

b) $H / B$ is finite cyclic, hence $$
d(H) \leq d(B) + 1 \leq d(H \cap M) + 2;
$$

c) there is a $K(H, 1)$ with $r(H, j)$ cells in dimension $j$, there is a $K(B, 1)$ with $r(B, j)$ cells in dimension $j$ and  there is a $K(T, 1)$ with $r(T, j)$ cells in dimension $j$   such that
$$
r(H, j) =  \sum_{0 \leq i \leq j} r(B, i)
$$
and
$$
r(B, j) = r(T, j) + r(T, j-1);
$$

d) fix the isomorphism $\theta : M \to G$ sending $x_i$ to $x_{i-1}$ for all $i \geq 1$ and define $\rho  = (\chi |_{M})  \theta^{-1} : G \to {\mathbb R}$.
Then
$$
\theta (H \cap M) \in {\mathcal A}_{\rho}.
$$
\end{lemma}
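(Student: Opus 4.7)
The plan is to exploit the decomposition $G = M \rtimes \langle x_0 \rangle$ together with the automatic normality of every finite-index subgroup of $G$, and then feed the resulting HNN and cyclic-extension descriptions into Lemma \ref{Ross} and Corollary \ref{stackcor}.

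First I would record that $M = \langle x_1, \ldots, x_n \rangle$ agrees with $\langle x_1, x_2, \ldots \rangle$: an easy induction using the instance $x_{k+n} = x_k^{-1} x_{k+1} x_k$ (for $k \geq 1$) of the defining relations shows $x_\ell \in M$ for all $\ell \geq 1$. Hence $M = \ker \chi_1$ with $\chi_1$ as in Theorem \ref{preli2}, so $M \trianglelefteq G$, $G = M \rtimes \langle x_0 \rangle$, and conjugation by $x_0$ induces an automorphism $\sigma$ of $M$. Because $F_{n,\infty}'$ is simple and therefore contained in $H$, the subgroup $H$ is itself normal in $G$, so $T = H \cap M$ is $G$-invariant and $T^{x_0^j} = T$ for every $j \in \mathbb{Z}$; in particular $B = \langle x_0^\alpha, T \rangle$. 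Since $T \cap \langle x_0^\alpha \rangle \subseteq M \cap \langle x_0 \rangle = 1$, this gives $B = T \rtimes \langle x_0^\alpha \rangle$ inside $G$, with $x_0^\alpha$ acting via $\sigma^\alpha|_T$; that is precisely the HNN extension required in (a), with both associated subgroups equal to $T$.

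For (b) I project to $G/M \cong \mathbb{Z}$: the image of $H$ is $\beta \mathbb{Z}$ for some $\beta \mid \alpha$ (by minimality of $\alpha$), the image of $B$ is $\alpha \mathbb{Z}$, and the restricted projection on $H$ has kernel $T$, so $H/B \cong \mathbb{Z}/(\alpha/\beta)$ is finite cyclic. Any preimage of a generator of $H/B$ adjoined to generators of $B$ gives $d(H) \leq d(B) + 1$, and $B = \langle T, x_0^\alpha \rangle$ immediately yields $d(B) \leq d(T) + 1$. For (d), $\theta$ is an isomorphism, so $\theta(H \cap M)$ has finite index in $\theta(M) = G$; moreover $\ker \rho = \theta(\ker \chi \cap M) \subseteq \theta(H \cap M)$ because $\ker \chi \subseteq H$, whence $\theta(H \cap M) \in \mathcal{A}_\rho$.

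For (c), the HNN structure from (a) is a finite graph of groups with one vertex (group $T$) and one loop edge (group $T$); $T$ inherits type $F_\infty$ from $M \cong G$, so Lemma \ref{Ross} produces a $K(B, 1)$ with $r(B, j) = r(T, j) + r(T, j-1)$ cells in dimension $j$ from any chosen $K(T, 1)$ having $r(T, j)$ cells. Feeding this $K(B,1)$ together with the standard infinite lens space model of $K(H/B, 1)$ (one cell in each nonnegative dimension) into Corollary \ref{stackcor} applied to $1 \to B \to H \to H/B \to 1$ yields a $K(H, 1)$ with $\sum_{0 \leq i \leq j} r(B, i) \cdot 1 = \sum_{0 \leq i \leq j} r(B, i)$ cells in dimension $j$. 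The one step that is not routine is part (a): one must verify that $T \rtimes \langle x_0^\alpha \rangle$ embeds as a subgroup of $G$ and not merely as a quotient of one, and this is exactly where the ambient decomposition $G = M \rtimes \langle x_0 \rangle$ is indispensable; once that is in hand, everything else is bookkeeping with the combinatorial tools already set up in the paper.
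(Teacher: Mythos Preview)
Your proof contains a genuine error: the subgroup $M=\langle x_1,x_2,\ldots\rangle$ is \emph{not} normal in $G$, so $M\neq\ker\chi_1$ and $G$ is not a semidirect product $M\rtimes\langle x_0\rangle$. The defining relations give $x_0^{-1}x_ix_0=x_{i+n-1}$ for $i\geq 1$, so $M^{x_0}=\langle x_n,x_{n+1},\ldots\rangle$ is a \emph{proper} subgroup of $M$; equivalently $x_0x_1x_0^{-1}\in\ker\chi_1\setminus M$. Thus $G$ is a strictly ascending HNN extension with base $M$ and stable letter $x_0$, not a split extension. Consequently $T^{x_0^j}\subsetneq T$ for $j>0$, the group $B$ is an ascending HNN extension with associated subgroups $T$ and $T^{x_0^\alpha}\subsetneq T$ (exactly as the lemma states, and not a semidirect product), and the quotient ``$G/M$'' you invoke in (b) does not exist.

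The paper repairs this by working with the normal closure $K=\bigcup_{j\in\BZ}M^{x_0^j}=\ker\chi_1$. Since $H$ is normal one has $(H\cap M)^{x_0^j}=H\cap M^{x_0^j}\subseteq H\cap M$ for $j\geq 0$, and combining this with conjugation by powers of $x_0^\alpha$ yields $B=\langle x_0^\alpha,\,H\cap M\rangle$; the ascending HNN structure of $B$ then follows from that of $G$. For (b), replace your projection to $G/M$ by the projection to $G/K\cong\BZ$: then $H\cap K\leq B\leq H$ with $H/(H\cap K)$ infinite cyclic, so $H/B$ is finite cyclic. Your arguments for (c) and (d) are essentially correct and match the paper's --- in (c) the edge group in the graph of groups is still (isomorphic to) $T$ because one of the two associated subgroups is the full base group, so the cell-count formula survives unchanged despite the error in (a).
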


\begin{proof}

a) Note that $K =  \cup_{j \in \mathbb{Z}}  M^{x_0^j} $ is a normal subgroup of $G$ with $G / K \simeq \mathbb{Z}$ and $ \cup_{j \geq 0}  M^{x_0^j}  =  M$.
Similarly since $H$ is normal in $G$  we have $\cup_{j \geq  0}  (H \cap M)^{x_0^j} =H \cap M$. Then
\begin{equation} \label{def-B}
B = \langle x_0^{\alpha}, \cup_{j \in \mathbb{Z}} (H \cap M)^{x_0^j} \rangle = \langle x_0^{\alpha}, H \cap M \rangle,
\end{equation}
hence
$B$ is an HNN extension with stable letter $x_0^{\alpha}$ and a base group $T = H \cap M $ and associated subgroup
$T $ and $T^{x_0^{\alpha}}$.

b) Observe that $K \cap H < B \leq H$ and $H / (K \cap H) \simeq \mathbb{Z}$, so $H / B$ is finite cyclic and $d(H) \leq d(B) + 1$. On other hand  by part a) we have $d(B) \leq d(H \cap M) + 1$.

c) By part a) and by Lemma \ref{Ross} for $T = H \cap M$
\begin{equation} \label{ovo1}
r(B, j) = r(T, j) + r( T, j-1).  
\end{equation}
Then 
 since $H/B$ is finite cyclic, there is a $K(H/B, 1)$ with 1 cell in each dimension. Then by Corollary \ref{stackcor} there is a $K(H, 1)$ with $r(H, j)$ cells in dimension $j$ such that
\begin{equation} \label{ovo2}
r(H, j) = \sum_{0 \leq i \leq j} r(B, i) r(H/B, j-i) = \sum_{0 \leq i \leq j} r(B, i) \leq 2 \sum_{0 \leq i \leq j} r(H \cap M,i) ,
\end{equation}

d)  Note that $Ker (\chi) \leq H$ imply  $Ker (\chi |_M) = Ker (\chi) \cap M \leq H \cap M$. Thus $Ker (\rho) = \theta(Ker (\chi |_M)) \leq    \theta(H \cap M)$.

\end{proof}

\section{Proof of Theorem A1}

Set $G = F_{n, \infty}$ and let $\chi : G \to \mathbb{R}$ be a non-zero homomorphism. Define ${\mathcal A}_{\chi}$ as the set of all subgroups of finite index in $G$ that contain $N = Ker(\chi)$.

We aim to show that for every homomorphism $\chi$ we have
\begin{equation} \label{character}
\sup_{H \in {\mathcal A}_{\chi}} d(H) < \infty
\end{equation}

We prove (\ref{character}) by induction on $k = rk(G/ N)$. Thus we can assume that $k \geq 1$ and if $k > 1$ then (\ref{character}) holds for smaller values of $k$.

Observe  that if $N$ is  finitely generated there is nothing to prove. 

Suppose that $N$ is not finitely generated.  By Theorem \ref{preli2} $\Sigma^1(G)^c$ has precisely two points $\{ [\chi_1], [\chi_2] \}$, where  $\chi_1(x_0) =  - 1, \chi_1(x_i) = 0$ for $i \geq 1$ and $\chi_2(x_i) = 1$ for all $i \geq 0$.
Since $N$ is not finitely generated by Theorem \ref{preli1}
$$
\chi_1(N) = 0 \hbox{ or } \chi_2(N) = 0.
$$
By (\ref{ovo40}) $\chi_2$ is obtained from $\chi_1$ by applying an automorphism of $G$, so it is enough to consider the first case i.e. $\chi_1(N) = 0$.

Note that all homomorphisms $\mu : G \to \mathbb{R}$ factor through $G / G' = \mathbb{Z}^n$, hence are given by  $\mu(g) = (\pi(g), v_{\mu})$, where $\pi : G \to G / G' = \mathbb{Z}^n = \mathbb{Z} \overline{x}_0 \oplus \mathbb{Z} \overline{x}_1 \oplus  \ldots  \oplus \mathbb{Z} \overline{x}_{n-1}  $ is the canonical projection, $\overline{x}_i$ is the image of $x_i$ in $G / G'$, $v_{\mu} = (\mu(x_0), \ldots, \mu(x_{n-1})) \in \mathbb{R}^n$ and $( \ \ , \ \  )$ is the standard scalar product in $\mathbb{R}^n$.
Since $\chi_1(N) = 0$  for $v_{\chi} = (v_0, \ldots , v_{n-1}) = (\chi(x_0), \ldots, \chi(x_{n-1}))$ and for every $z_0, \ldots, z_{n-1} \in \mathbb{Z}$ such that $\sum_{0 \leq i \leq n-1} z_i v_i = 0$ we have $z_0 = 0$. Hence $\mathbb{Z} v_0 \cap (\sum_{1 \leq i \leq n-1} \mathbb{Z} v_i) = 0$ and $v_0 \not= 0$, so
\begin{equation} \label{*1}
\mathbb{Z}^k = Im (\chi) = \mathbb{Z} v_0 \oplus (\sum_{1 \leq i \leq n-1} \mathbb{Z} v_i)
\end{equation}
Recall that
$M = \langle x_1,  x_2, \ldots , x_n \rangle < G.
$
Consider
$
\nu = \chi |_{M} : M \to \mathbb{R}.
$
Note that $\chi(x_n) = \chi(x_1^{x_0}) = \chi(x_1)$, hence by (\ref{*1})
\begin{equation} \label{smallerrank} rk \ Im (\nu) = rk ( \sum_{1 \leq i \leq n-1} \mathbb{Z} v_i) = k-1.
\end{equation}
By Lemma \ref{ovoprincipal} d)  $\theta(H \cap M) \subseteq {\mathcal A}_{\rho}$, where 
$ \rho  = \nu \theta^{-1} : G \to {\mathbb R}. $
Then $rk(Im (\rho)) = rk(Im (\nu))$ and by induction 
$$
\sup_{S \in {\mathcal A}_{\rho}} \ d(S) < \infty,$$
so
$$ 
\sup_{H \in {\mathcal A}_{\chi}} \ d(H \cap M)  = \sup_{H \in {\mathcal A}_{\chi}} \ d(\theta(H \cap M)) \leq \sup_{S \in {\mathcal A}_{\rho}} \ d(S)  < \infty.
$$
\noindent
 Consider the group $B = \langle x_0^{\alpha}, \cup_{j \in \mathbb{Z}} (H \cap M)^{x_0^j} \rangle$ defined in Lemma \ref{ovoprincipal} and by Lemma \ref{ovoprincipal} b)
 $d(H) \leq 2 + d(H \cap M)$, hence
$$
\sup_{H \in {\mathcal A}_{\chi}} \ d(H) \leq 2 + \sup_{H \in {\mathcal A}_{\chi}}  \ d(H \cap M ) < \infty.
$$
This completes the proof of (\ref{character}) and (\ref{character}) applied for $Ker (\chi) = G'$ completes the proof of Theorem A1.  

2. Now using more detailed information on the $Aut(F_{n, \infty})$ we find an upper bound on the suprimum in (\ref{character}) for $n \geq 3$. We aim to show that for any subgroup $H$ of finite index in $G$ we have
\begin{equation} \label{ovo5}
d(H) \leq n+2 + d(G' \langle x_0, x_{n-1} \rangle) < \infty.
\end{equation}
Consider $H \in {\mathcal A}_{\chi}$, set $\alpha_i = \chi(x_i)$ and let $B$ be the group defined in Lemma \ref{ovoprincipal}. 

If $\alpha_i = 0$ for $1 \leq i \leq n-1$ then $B = H$ and by Lemma \ref{ovoprincipal} b)  we have  $d(H) \leq 1 + d(H \cap M) = 1 + d(M) = 1+ n$.

If $\alpha_i \not= 0$ for some $1 \leq i \leq n-1$ substituting $\chi$ with $\widetilde{\varphi}^{i-1} (\chi)$, where $\widetilde{\varphi}$ was defined in section \ref{automorphisms}, we can assume that $\alpha_1 \not= 0$, hence we can assume that $\alpha_1 = 1$. 

By Lemma \ref{ovoprincipal} d) we have $\theta(H \cap M) \in {\mathcal A}_{\rho}$.
Thus by Lemma \ref{ovoprincipal} b)
\begin{equation} \label{ovo8}
sup_{H \in {\mathcal A}_{\chi}} d(H) \leq 2 + sup_{H \in {\mathcal A}_{\rho}} d(H). 
\end{equation}

Note that $\rho(x_0) = \rho(x_{n-1}) = \chi(x_1) = 1$.
We can choose from the very beginning  $\chi$ in such a way that $Ker (\chi) = G'$. Then since 
$\rho(x_i) = \chi(x_{i+1})$ for $1 \leq i \leq n-2$ we have
$Ker(\rho) = G' \langle x_0 x_{n-1}^{-1} \rangle$.
Note that if $n \geq 3$ using the matrices $A$ and $C$ from section \ref{automorphisms} there are homomorphisms $\rho_0, \tilde{\rho} : G \to \mathbb{R}$ such that
\[
A^{n-3} C .  \begin{pmatrix} \rho(x_0) = 1 \\ \rho(x_1) \\
                    \cdots \\ \rho(x_{n-2}) \\
                    \rho(x_{n-1}) = 1 \end{pmatrix} =  
A^{n-3} \begin{pmatrix}  \tilde{\rho}(x_0) \\  \tilde{\rho}(x_1) \\
\cdots \\
                    \tilde{\rho}(x_{n-2}) = 0 \\
                    \tilde{\rho}(x_{n-1}) \end{pmatrix} =  
 \begin{pmatrix} \rho_0(x_0) \\
\rho_0(x_1) = 0 \\
\cdots \\
                    \rho_0(x_{n-2}) \\
                    \rho_0(x_{n-1}) \end{pmatrix},\]
 hence
 there is an automorphism of $G$ sending $\rho$ to $\rho_0$ and $Ker (\rho_0) = G' \langle x_1 \rangle$. 
Then 
by (\ref{ovo8}) and using that ${\mathcal A} = {\mathcal A}_{\chi}$ we have
\begin{equation} \label{ovo10}
sup_{H \in {\mathcal A}} d(H) \leq 2 +  sup_{H \in {\mathcal A}_{\rho}} d(H) = 2 +  sup_{H \in {\mathcal A}_{\rho_0}} d(H).
\end{equation}

Assume now that $H \in  {\mathcal A}_{\rho_0}$. Recall that $\rho_0 (x_1) = 0$ and $ rk (Im (\rho_0)) = rk (Im (\rho)) = rk( Im (\chi)) - 1 = n-1$. By (\ref{ovo8}) applied for $\rho_0$ instead of $\chi$ we have
\begin{equation} \label{ovo88}
sup_{H \in {\mathcal A}_{\rho_0}} d(H) \leq 2 + sup_{H \in {\mathcal A}_{\rho_1}} d(H), 
\end{equation}
where $\rho_1 =  (\rho_0 |_{M})  \theta^{-1}$. Thus $\rho_1(x_0) = \rho_1 (x_{n-1}) = \rho_0 (x_1) = 0$ and $\rho_1(x_i) = \rho_0(x_{i+1})$ for $ 1 \leq i \leq n-2$. Thus $rk (Im (\rho_1)) = rk (Im (\rho_0)) - 1 =  n-2$ and  
$Ker(\rho_1) = G' \langle x_0, x_{n-1} \rangle \not\subseteq Ker (\chi_1) \cup Ker (\chi_2)$, so $Ker(\rho_1)$ is finitely generated.

Finally  for $H \in {\mathcal A}_{\rho_1}$ we have that $H / G' \langle x_0, x_{n-1} \rangle \simeq \mathbb{Z}^{n-2}$, so
\begin{equation} \label{ovo11}
d(H) \leq d(G' \langle x_0, x_{n-1} \rangle) + n - 2.
\end{equation}
Then by  (\ref{ovo10}), (\ref{ovo88}) and (\ref{ovo11}) for $n \geq 3$
$$
sup_{H \in {\mathcal A}} d(H)  \leq 2 + 2 + d(G' \langle x_0, x_{n-1} \rangle) + n - 2 = d(G' \langle x_0, x_{n-1} \rangle ) + n + 2.
$$

Note that Theorem A2 gives an upper bound on $d(H)$ when $n = 2$.

\section{Proof of Theorem A2}
Let $H \in {\mathcal A}_{\chi_0}$ for some non-zero homomorphism 
$
\chi_0 : F \to \mathbb{R}
$
 i.e. 	$Ker (\chi_0) \subseteq  H$ and $H$ has finite index in $F$. There are several cases.

1. Suppose that $\chi_0 \in {\mathbb R} \chi_1$ i.e. $\chi_0(x_1) = 0$. Then $M \subseteq Ker (\chi_0) \subseteq H$ and for the group $B$ defined in Lemma \ref{ovoprincipal} we have $H = B$. Then by Lemma \ref{ovoprincipal} c) there is a $K(H, 1)$ complex with
$$
r(H, j) = r(M, j) + r(M, j-1)
$$
cells in dimension $j$, where $r(M,j)$ is the number of $j$-dimensional cells in some $K(M, 1)$ complex. By \cite{BG} we might take $r(M, j) = 2$ for $j \geq 1$ and $r(M, 0) = 1$. In particular $r(H, j) = 4$ for every $j \geq 2$, $r(H, 1) = 3$ and $r(H, 0) = 1$.

2.  Suppose that $\chi_0 \in {\mathbb R} \chi_2$. Then since $\chi_1$ and $\chi_2$ are conjugated by an outer automorphism of $G$ we reduce to case 1 i.e. there is a $K(H, 1)$ complex with $r(H, j) = 4$ cells in dimension $j \geq 2$,  $r(H, 1) = 3$ and $r(H, 0) = 1$.

3. Here we do not impose any restrictions on $\chi_0$. Let $B$ be the group defined in Lemma \ref{ovoprincipal} i.e. $
B = \langle x_0^{\alpha}, \cup_{j \in \mathbb{Z}} (H \cap M)^{x_0^j} \rangle.
$
Then by Lemma \ref{ovoprincipal} c) there is a $K(B,1)$ complex with 
\begin{equation} \label{volume1}
r(B, j) = r(T, j) + r(T, j-1)
\end{equation}
cells in dimension $j$, where $r(T,j)$ is the number of $j$-dimensional cells in some $K(T, 1)$ complex, $T = H \cap M$.
This reduces the problem to the study of the original problem for the finite index subgroup $H \cap M$ of $M$. Note that the restriction $\nu$ of $\chi_0$ to $M$ has the property $\nu(x_1) = \nu(x_2) \not= 0$ and $M \simeq F$ via $x_i$ does to $x_{i-1}$. Thus we can apply case 2 for the finite index subgroup $T = H \cap M$ of $M$ and obtain that  
there is a $K(T, 1)$ complex with $r(T, j) = 4$ cells in dimension  $j \geq 2$,  $r(T, 1) = 3$ and $r(T, 0) = 1$.
Then by (\ref{volume1}) we have 
$r(B, j) = 8$ for $j \geq 3$, $r(B, 2) = 7$, $r(B, 1) = 4$ and $r(B, 0) = 1$.

Finally note that by Lemma \ref{ovoprincipal}  there is a $K(H, 1)$ with $r(H, j)$ cells in dimension $j$ such that
$
r(H, j) = \sum_{0 \leq i \leq j} r(B, i),
$
hence
$$
r(H, j) = \sum_{0 \leq i \leq j} r(B, i) = 1 + 4 + 7 + (j-2) 8 = 8j - 4 \hbox{ for } j \geq 3, 
 $$
$$
r(H,2) = 12, r(H, 1) = 5 \hbox{ and } r(H, 0) = 1.
$$

\section{One Sigma condition}

\begin{prop} \label{sigma}  Let $G = F_{n, \infty}$ and let ${\mathcal A}$ be the set of all subgroups of finite index in $G$. Suppose that 
\begin{equation} \label{sigmacondition}
\Sigma^m(G) = S(G) \setminus  conv_{\leq 2} \{ [\chi_1], [\chi_2] \} 
\end{equation}
Then for every $H \in \mathcal A$ there is a $K(H, 1)$ CW-complex with $r(H,j)$ cells in each dimension $0 \leq j \leq m$ such that $$\sup_{H \in {\mathcal A}} r(H,j) < \infty \hbox{ for }  0 \leq j \leq m.$$
\end{prop}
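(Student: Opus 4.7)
The plan is to prove, by induction on $k = rk(G / Ker(\chi_0))$, that for every non-zero $\chi_0 : G \to \mathbb{R}$,
\[
\sup_{H \in \mathcal{A}_{\chi_0}} r(H, j) < \infty \quad \text{for all } 0 \leq j \leq m.
\]
Since every finite-index subgroup of $G$ contains the simple group $G'$, taking $\chi_0$ of rank $n$ with $Ker(\chi_0) = G'$ makes $\mathcal{A}_{\chi_0} = \mathcal{A}$, yielding the proposition.

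The base of the induction is the case that $N := Ker(\chi_0)$ is of type $F_m$; by Theorem \ref{preli1} combined with hypothesis (\ref{sigmacondition}), this occurs iff the subspace $Hom(G/N, \mathbb{R}) \subseteq Hom(G, \mathbb{R})$ avoids the bad arc $conv_{\leq 2}\{[\chi_1], [\chi_2]\}$. In this case $H/N$ is a finite-index subgroup of $G/N \cong \mathbb{Z}^k$, hence itself $\cong \mathbb{Z}^k$, with $K(H/N, 1)$ realized by the $k$-torus (having $\binom{k}{j}$ cells in dimension $j$); Corollary \ref{stackcor} applied to $N \to H \to H/N$ produces a $K(H, 1)$ with $r(H, j) \leq \sum_{0 \leq i \leq j} r(N, i) \binom{k}{j-i}$, a bound uniform in $H \in \mathcal{A}_{\chi_0}$.

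For the inductive step, $N$ is not of type $F_m$, so $Hom(G/N, \mathbb{R})$ meets the bad arc. Using the $D$-action on characters from Section \ref{automorphisms}, which permutes the arc and its endpoints, I would replace $\chi_0$ (and correspondingly $H$) by a $D$-translate to arrange $\chi_1(N) = 0$. Lemma \ref{ovoprincipal}(c), (d) then supplies $T = H \cap M$ with
\[
r(H, j) = \sum_{0 \leq i \leq j} r(B, i), \qquad r(B, i) = r(T, i) + r(T, i-1),
\]
and $\theta(T) \in \mathcal{A}_\rho$ for $\rho = (\chi_0|_M)\theta^{-1}$; equation (\ref{smallerrank}) identifies $rk(\rho) = k - 1$, so the inductive hypothesis applied to $\rho$ bounds $r(T, i) = r(\theta(T), i)$ uniformly, and hence $r(H, j)$ as well.

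The hard part is justifying the reduction to $\chi_1(N) = 0$ at every iteration: a priori $Hom(G/N, \mathbb{R})$ might meet the bad arc only in its relative interior, where $D$ does not act transitively to the endpoints. The saving feature, traceable back to the initial choice $Ker(\chi_0) = G'$, is that the shift formula $(a_0, a_1, \ldots, a_{n-1}) \mapsto (a_1, a_2, \ldots, a_{n-1}, a_1)$ describing $\rho = (\chi_0|_M)\theta^{-1}$ forces $V_{Ker(\rho)} = \{\chi : \chi(x_0) = \chi(x_{n-1})\}$ to meet the bad arc exactly at the endpoint $[\chi_2]$, and applying $\tilde{\mu} \in D$ turns this into $\chi_1(Ker(\tilde{\mu}(\rho))) = 0$. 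Showing that this endpoint property persists through successive iterations (checking for instance in the $n=4$ case that the next character $\rho_2$ already has $V_{Ker(\rho_2)}$ disjoint from the bad arc, landing the recursion directly in the base case) is the main technical step.
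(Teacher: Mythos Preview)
Your approach is essentially the same as the paper's: induct on $k = rk(Im(\chi))$, handle the case that $N = Ker(\chi)$ is of type $F_m$ directly via Corollary~\ref{stackcor}, and otherwise use Lemma~\ref{ovoprincipal} to pass to $\rho = (\chi|_M)\theta^{-1}$, exploiting the built-in constraint $\rho(x_0)=\rho(x_{n-1})$ to force any witness $[\chi_0]\in\Sigma^m(G)^c$ with $\chi_0(Ker\rho)=0$ to be the endpoint $[\chi_2]$, then swap via $\mu$ and drop rank by one through~(\ref{smallerrank}).

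Two small corrections. First, your equality $V_{Ker(\rho)}=\{\chi:\chi(x_0)=\chi(x_{n-1})\}$ should be an inclusion $\subseteq$; the left side may be strictly smaller, but that is all you need, since any $\chi_0 = r_1\chi_1+r_2\chi_2$ lying in it then satisfies $\chi_0(x_0)=\chi_0(x_{n-1})$, i.e.\ $r_2-r_1=r_2$, forcing $r_1=0$. Second, what you call ``the main technical step'' --- persistence of the endpoint property through successive iterations --- is in fact automatic and requires no case-by-case verification: \emph{every} character produced by Lemma~\ref{ovoprincipal}(d) satisfies $\rho(x_0)=\chi(x_1)=\chi(x_1^{x_0})=\chi(x_n)=\rho(x_{n-1})$, so the endpoint argument applies uniformly at each stage. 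The paper's Case~2.3 makes exactly this point: once you have reduced (via one application of the lemma) to a character with $\chi(x_0)=\chi(x_{n-1})$, either $Ker(\chi)$ has type $F_m$ (Case~1), or the bad witness is $[\chi_2]$, $\mu$ swaps it to $[\chi_1]$, and Case~2.1 then gives the rank drop. No checking ``for instance in the $n=4$ case'' is needed.
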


{\bf Remarks} 

1. By Theorem  \ref{preli3} (\ref{sigmacondition}) holds for $m = 2$. Then if (\ref{sigmacondition}) holds for  some $m > 2$ since $\Sigma^{j}(G) \subseteq \Sigma^{j-1}(G)$ for all $j \geq 2$ we have
$$
\Sigma^j(G) = S(G) \setminus  conv_{\leq 2} \{ [\chi_1], [\chi_2] \} \hbox{ for all } 2 \leq j \leq m. $$

2. The proof of Proposition \ref{sigma} requires only that 
\begin{equation} \label{eq123}
S(G) \setminus \{ [r_1 \chi_1 + r_2 \chi_2] | (r_1, r_2) \in \mathbb{R} \times \mathbb{R} \setminus (0,0) \} \subseteq \Sigma^m(G)
\end{equation}
but by \cite[Thm.~10]{K} the conditions (\ref{eq123}) and (\ref{sigmacondition}) are equivalent.

\begin{proof}

Consider   a non-zero homomorphism $\chi : G \to \mathbb{R}$ and recall that ${\mathcal A}_{\chi}$ is the set of all subgroups of finite index in $G$ that contain $N = Ker(\chi)$.

We claim that
 \begin{equation} \label{ovo123} \sup_{H \in {\mathcal A}_{\chi}} r(H,j) < \infty \hbox{ for }  0 \leq j \leq m.\end{equation}

Proposition \ref{sigma} follows from (\ref{ovo123}) applied for a homomorphism $\chi$ such that $Ker (\chi) = G'$.

\noindent
To prove the claim we induct on $k =rk(G/N)$ and assume that $k \geq 1$ and if $k > 1$ then the claim holds for  smaller values of $k$.

1. Suppose first that $N$ has type $F_m$, so there is a $K(N, 1)$ CW complex with $\alpha_i$ cells in dimension $0 \leq i \leq m$. Observe that for $H \in {\mathcal A}_{\chi}$ we have $H / N \simeq \mathbb{Z}^k$, hence there is a $K(H/ N, 1)$ CW complex of dimension $k$ and with $k \choose j$ cells in dimension $0 \leq j $. Hence by Corollary \ref{stackcor} there is a $K(H, 1)$ CW complex with $r(H,j) = \sum_{0 \leq i \leq j} \alpha_i {k \choose j-i }$ cells in dimension $j \leq m$.

2. Consider the group $B$ from Lemma \ref{ovoprincipal}. Then by Lemma \ref{ovoprincipal} c) there is a $K(H \cap M, 1)$  complex with $r(H \cap M, j)$ cells in dimension $j$ and a $K(B, 1)$  complex with $r(B, j)$ cells in dimension $j$ such that 
\begin{equation} \label{ovo1}
r(B, j) = r(H \cap M, j) + r( H \cap M, j-1)  
\end{equation}
and there is a $K(H, 1)$ with $r(H, j)$ cells in dimension $j$ such that
\begin{equation} \label{ovo2}
r(H, j) =  \sum_{0 \leq i \leq j} r(B, i) \leq 2 \sum_{0 \leq i \leq j} r(H \cap M,i) ,
\end{equation}
Thus it suffices to show that
\begin{equation} \label{ovo4}
\sup_{H \in {\mathcal A}_{\chi}} r(H \cap M,j) < \infty \hbox{ for }j \leq m.
\end{equation}
Consider $\rho$ from Lemma \ref{ovoprincipal} d) and note that 
 $\chi(x_n) = \chi(x_1^{x_0}) = \chi(x_1)$,  hence $\rho(x_0) = \rho(x_{n-1})$. By Lemma \ref{ovoprincipal} d)  $S = \theta(H \cap M) \subseteq {\mathcal A}_{\rho}$
and so
\begin{equation} \label{nnn} \sup_{H \in {\mathcal A}_{\chi}} r(H \cap M, j) = \sup_{H \in {\mathcal A}_{\chi}} r(\theta(H \cap M), j) \leq	\sup_{S \in {\mathcal A}_{\rho}} r(S, j)   < \infty,
\end{equation}
where $r(H \cap M, j)$, $ r(\theta(H \cap M), j) $, $r(S, j)$ are the numbers of $j$-cells in some $K(H \cap M, 1)$, $K(\theta(H \cap M), 1)$ and $K(S, 1)$-complexes.

2.1. Suppose  that $\chi_1(N) = 0$ and recall that $N = Ker (\chi)$. We can continue as in the proof of Theorem A1. By    (\ref{smallerrank}) we have $rk \ Im (\nu) = k-1$, where $\nu = \chi |_M$, so by induction there is a $K(S,1)$ complex with $r(S,j)$ cells in dimension $j \leq m$ such that 
$$
\sup_{S \in {\mathcal A}_{\rho}} r(S, j) < \infty. 
$$
Then (\ref{nnn}) completes the proof.

2.2. Suppose that $M \subseteq H$. In this case $B = H$  and by (\ref{ovo1}) $r(B, j) = r(M, j) + r( M, j-1) < \infty$. 

2.3. Finally we reduce the general case  to the previous cases.
 
By (\ref{nnn}), condition (\ref{ovo4}) is equivalent to the original claim for $\chi(x_0) = \chi(x_{n-1})$ and we are left to prove the proposition for  $\chi(x_0) = \chi(x_{n-1})$ and $N = Ker(\chi)$.
By case 1 we can assume that $N$ is not of type $F_m$. Then by Theorem \ref{preli1} there is $\chi_0 : G \to \mathbb{R}$ such that $\chi_0(N) = 0$ and $[\chi_0] \in S(G) \setminus \Sigma^m(G) =  conv_{\leq 2} \{ [\chi_1], [\chi_2] \}$, thus $\chi_0  = (r_2 - r_1, r_2, \ldots , r_2)$ for some positive real numbers  $r_1, r_2$ i.e. $\chi_0( x_0) = r_2 - r_1$ and $\chi_0(x_i) = r_2$ for $1 \leq i \leq n-1$. Note that $x_0^{-1} x_{n-1} \in Ker (\chi) = N \subseteq Ker (\chi_0)$, hence $0 = \chi_0(x_0^{-1} x_{n-1}) = r_2 - (r_2 - r_1) = r_1$, hence $\chi_0 =  r_2 \chi_2$ and we can assume that $r_2 = 1$. Furthermore by applying the automorphism $\mu$ of $F_{n, \infty}$ that swaps $\chi_1$ with $\chi_2$ we can reduce to the case when $\chi_0 = \chi_1$, then apply case 2.1. 
\end{proof}

\section{ Proof of Theorem B}
Note that by Theorem \ref{preli3} the condition on the $\Sigma^m(G)$ in the statement of Proposition \ref{sigma}  holds for $m = 2$.

By Proposition \ref{sigma} there is a $K(G_s, 1)$ complex with $r(G_s, j)$ cells in dimension $j$ for $0 \leq j \leq 2$ such that $sup_s r(G_s, j) < \infty$, so
\begin{equation} \label{vienna}
\lim_{s \to \infty} \ r(G_s,j) / [G : G_s] = 0 \hbox{ for } 0 \leq j \leq 2.
\end{equation}
From the 2-skeleton of the $K(G_s, 1)$ complex we get a group presentation with $r(G_s, 1) - r(G_s, 0) + 1$ generators and $r(G_s, 2)$ relators. Then 
by \cite[Lemma~2]{BT} (observe the definition of deficiency in \cite{BT} is the infimum of $|R| - |X|$, so in the following we adapt \cite[Lemma~2]{BT} to our definition of deficiency)
\begin{equation} \label{*10}
1- r(G_s, 0) +  r(G_s,1) - r(G_s,2) \leq def(G_s) \leq rk( H_1(G_s, \mathbb{Z})) - d(H_2(G_s, \mathbb{Z})) $$ $$ \leq rk( H_1(G_s, \mathbb{Z})).
\end{equation}
Observe that $G'$ is a simple group and the commutator $G_s'$ is normal in $G$, hence $G' = G_s'$ and so \begin{equation} \label{vienna2}   rk( H_1(G_s, \mathbb{Z})) =  rk( H_1(G, \mathbb{Z})) = 2.\end{equation}
By (\ref{vienna}), (\ref{*10}) and (\ref{vienna2})
$$
\lim_{s \to \infty} def(G_s) / [G : G_s] = 0,
$$
so Theorem B holds.

\section{Higher dimensional gradients}

\subsection{Preliminaries} \label{final0}

Let $G$ be a group of type $F_m$ that has a $K(G,1)$ CW-complex with $\alpha_j$ cells in dimension $j \leq m$.
Define 
$$\chi_m(G) =   inf \{ \sum_{0 \leq i \leq m} (-1)^{m - i} \alpha_i \   \}.$$
This is well defined if for any such CW-complex we have 
\begin{equation} \label{*}
\sum_{0 \leq i \leq m} (-1)^{m - i} \alpha_i \geq 0.
\end{equation}

We recall the definition of the Novikov ring  $\widehat{\mathbb{Z} G}_{\chi}$ associated to a non-zero homomorphism $\chi : G \to \mathbb{R}$. By definition 
an element of  $\widehat{\mathbb{Z} G}_{\chi}$ is an element $\lambda = \sum_{g \in G} z_g g$ of $\prod_{g \in G} \mathbb{Z} g$ (thus the sum can be infinite) with the property that $supp(\lambda) \cap (- \infty, r]$ is finite for every $r \in \mathbb{R}$.  The Novikov ring can be viewed as a completion of the the group algebra $\mathbb{Z} G$ and has strong links with $\Sigma$ theory. The following can be found in an appendix of the unpublished book on $\Sigma$-theory \cite{BieriStrebelbook}. As the proof is easy we give a sketch.

\begin{lemma} \cite{BieriStrebelbook} Let $G$ be a group of type $FP_m$, where the trivial $\mathbb{Z} G$-module $\mathbb{Z}$ has  a free resolution
$$
{\mathcal F} : \ldots \to \mathbb{Z} G ^{\alpha_i}   \stackrel{\partial_i}{\rightarrow}  \mathbb{Z} G ^{\alpha_{i-1}} 
\stackrel{\partial_{i-1}}{\rightarrow} \ldots \stackrel{\partial_1}{\rightarrow}
\mathbb{Z} G ^{\alpha_{0}} \stackrel{\partial_0}{\rightarrow} \mathbb{Z} \to 0
$$
Let $\chi : G \to \mathbb{R}$ be a non-zero character and $\widehat{\mathbb{Z} G}_{\chi}$ be the Novikov ring associated with $\chi$.  Suppose that $Tor^{\mathbb{Z} G}_i( \widehat{\mathbb{Z} G}_{\chi}, \mathbb{Z}) = 0$ for all $i \leq m-1$. Then condition (\ref{*}) holds.
\end{lemma}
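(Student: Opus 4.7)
The plan is to tensor the given resolution $\mathcal{F}$ with the Novikov ring, use the hypothesis to extract an almost-exact complex of finite free $\widehat{\mathbb{Z} G}_{\chi}$-modules, and read condition $(*)$ off as the rank of a stably free Novikov module.

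First I would apply $\widehat{\mathbb{Z} G}_{\chi}\otimes_{\mathbb{Z} G}(-)$ to the truncated resolution $\ldots\to\mathbb{Z} G^{\alpha_{0}}\to 0$; the homology of the resulting complex is by definition $\mathrm{Tor}_{\ast}^{\mathbb{Z} G}(\widehat{\mathbb{Z} G}_{\chi},\mathbb{Z})$. A preliminary observation, free of any hypothesis, is that $\mathrm{Tor}_{0}=0$: choose $g\in G$ with $\chi(g)>0$; the geometric series $\sum_{k\ge 0}g^{k}$ has support bounded below with respect to $\chi$ and therefore lies in $\widehat{\mathbb{Z} G}_{\chi}$, providing an inverse of $1-g$, while $1-g$ kills $\mathbb{Z}$. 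Combining this with the hypothesis $\mathrm{Tor}_{i}^{\mathbb{Z} G}(\widehat{\mathbb{Z} G}_{\chi},\mathbb{Z})=0$ for $1\le i\le m-1$ yields an exact sequence of $\widehat{\mathbb{Z} G}_{\chi}$-modules
$$0\longrightarrow K\longrightarrow\widehat{\mathbb{Z} G}_{\chi}^{\alpha_{m}}\stackrel{\partial_{m}}{\longrightarrow}\widehat{\mathbb{Z} G}_{\chi}^{\alpha_{m-1}}\longrightarrow\cdots\longrightarrow\widehat{\mathbb{Z} G}_{\chi}^{\alpha_{0}}\longrightarrow 0,$$
where $K:=\ker\partial_{m}$.

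Next I would slice this long exact sequence into short exact sequences $0\to Z_{i}\to\widehat{\mathbb{Z} G}_{\chi}^{\alpha_{i}}\to Z_{i-1}\to 0$ for $1\le i\le m$, with $Z_{i}:=\ker\partial_{i}$, $Z_{0}:=\widehat{\mathbb{Z} G}_{\chi}^{\alpha_{0}}$, and $Z_{m}=K$. The base term $Z_{0}$ is free; then by induction on $i$ each sequence splits, because the right-hand term $Z_{i-1}$ is projective (indeed stably free) by the previous step. Consequently every $Z_{i}$ is stably free, and the rank counting $\mathrm{rank}(Z_{i})=\alpha_{i}-\mathrm{rank}(Z_{i-1})$ gives by an easy induction $\mathrm{rank}(Z_{i})=\sum_{j=0}^{i}(-1)^{i-j}\alpha_{j}$. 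Taking $i=m$, the module $K$ is stably free of rank $\sum_{j=0}^{m}(-1)^{m-j}\alpha_{j}$.

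To conclude I would invoke the fact that $\widehat{\mathbb{Z} G}_{\chi}$ is a nonzero stably finite ring, so that every finitely generated stably free module over it has a well-defined non-negative integer rank. This gives $\sum_{j=0}^{m}(-1)^{m-j}\alpha_{j}=\mathrm{rank}(K)\ge 0$, which is $(*)$. The main technical point—the only step that is not formal homological bookkeeping—is the stable finiteness of the Novikov ring; this is standard and is typically verified via a ``leading term'' argument exploiting the fact that supports of Novikov elements are well-ordered from below in $\chi$, but it is the one place where genuine ring-theoretic content enters the sketch.
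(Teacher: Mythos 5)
Your homological bookkeeping coincides with the paper's own argument up to the last step: tensor the resolution with the Novikov ring, note that $Tor_0$ vanishes (your geometric-series inverse of $1-g$ is fine, and in any case $i=0$ is covered by the hypothesis), slice the resulting exact complex into short exact sequences, split them inductively, and conclude that the top kernel $K$ is stably free with rank bookkeeping giving the alternating sum. The divergence, and the problem, is the final step. You reduce everything to the assertion that $\widehat{\mathbb{Z} G}_{\chi}$ is stably finite and dismiss this as standard via a ``leading term'' argument. That is a genuine gap. For a general group $G$ the lowest part of a Novikov element is not a single monomial but an arbitrary element of $\mathbb{Z} G$ supported on one $\chi$-level set, so ``leading term of a product equals product of leading terms'' fails precisely when two such nonzero group-ring elements multiply to zero; ruling this out is Kaplansky zero-divisor territory (and a group of type $FP_m$ may have torsion, in which case $\mathbb{Z} G$ genuinely has zero divisors). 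Worse, stable finiteness is a statement about the matrix rings $M_k(\widehat{\mathbb{Z} G}_{\chi})$, where no leading-term filtration argument even gets started. Von Neumann/stable finiteness of Novikov rings is in fact a hard property, established only for restricted classes of groups (there is literature, including by the author of this very paper, devoted to proving it for Novikov rings of amenable groups); for an arbitrary group $G$ of type $FP_m$, which is the generality of the lemma, you cannot invoke it.

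The gap is easily repaired, and the repair is exactly what the paper does: one needs no finiteness property of $\widehat{\mathbb{Z} G}_{\chi}$ itself, only a ring homomorphism from it to a field. Let $Q$ be the maximal torsion-free abelian quotient of $G$; then $\chi$ factors through $Q$, and pushing coefficients forward along $G \to Q$ gives a well-defined ring homomorphism $\widehat{\mathbb{Z} G}_{\chi} \to \widehat{\mathbb{Z} Q}_{\chi}$ (each fibre of $G \to Q$ lies in a single $\chi$-level, and a Novikov support meets each level in finitely many points, so the pushed-forward coefficients are finite sums). Since $Q$ is finitely generated free abelian, $\mathbb{Z} Q$ is a domain, the leading-term argument does work there, and $\widehat{\mathbb{Z} Q}_{\chi}$ is a domain; let $R$ be its field of fractions. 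Now either tensor your split short exact sequences with $R$ (split exactness survives any additive functor) and count dimensions over the field $R$, or, equivalently, note that the rank of your stably free module $K$ can be computed after base change to $R$ and hence is non-negative. This yields condition (\ref{*}) for every group of type $FP_m$, with the only ring-theoretic input being that $\mathbb{Z}[\mathbb{Z}^k]$ is a domain.
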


\begin{proof} Applying the functor $  \widehat{\mathbb{Z} G}_{\chi} \otimes_{\mathbb{Z} G}$ to the free resolution $\mathcal F$ we obtain an exact sequence  up to dimension $m$ 
$$
{\mathcal S} : \widehat{\mathbb{Z} G}_{\chi} ^{\alpha_m}   \stackrel{\hat{\partial}_m}{\rightarrow}  \widehat{\mathbb{Z} G}_{\chi} ^{\alpha_{m-1}} 
\stackrel{\hat{\partial}_{m-1}}{\rightarrow} \ldots \stackrel{\hat{\partial}_1}{\rightarrow}
\widehat{\mathbb{Z} G}_{\chi} ^{\alpha_{0}} \stackrel{\hat{\partial}_0}{\rightarrow} \widehat{\mathbb{Z} G}_{\chi} \otimes_{\mathbb{Z} G} \mathbb{Z}  = 0
$$
Then the short exact sequences associated to the above exact sequence give that $Im (\hat{\partial}_i)$ is projective 
$\widehat{\mathbb{Z} G}_{\chi}$-module for all $0 \leq i \leq m-1$, all these short exact sequences split. We apply to the split short exact sequences the functor $R \otimes_{\widehat{\mathbb{Z} G}_{\chi}} - $, where $R$ is the field of fraction of $\widehat{\mathbb{Z} Q}_{\chi}$ and $Q$ is the maximal torsion-free abelian quotient of $G$, and obtain short exact sequences again. 
Hence the sequence 
$$R \otimes_{\widehat{\mathbb{Z} G}_{\chi}} {\mathcal S} :
R^{\alpha_m}  \rightarrow R^{\alpha_{m-1}} \to \ldots \to R^{\alpha_0} \to 0
$$ is exact and since $R$ is a field,  (\ref{*}) holds.
\end{proof}

The following result for $m = 1$ is due to Sikorav \cite{S}.

\begin{theorem} \cite{S}, \cite[Thm.~2]{Bieri} Let $G$ be a group of type $FP_m$. Then $[\chi] \in \Sigma^m(G, \mathbb{Z})$ if and only if $Tor^{\mathbb{Z}G}_i( \widehat{\mathbb{Z} G}_{\chi}, \mathbb{Z}) = 0$ for $i \leq m$.
\end{theorem}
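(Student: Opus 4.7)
The plan is to establish the equivalence by reformulating $\Sigma^m(G,\mathbb{Z})$ algebraically in terms of chain complexes filtered by the character $\chi$. First I would invoke the standard Bieri--Renz reformulation: $[\chi] \in \Sigma^m(G,\mathbb{Z})$ is equivalent to $\mathbb{Z}$ being of type $FP_m$ as a module over the monoid ring $\mathbb{Z} G_\chi^+$, where $G_\chi^+ = \{g \in G : \chi(g) \geq 0\}$. Equivalently, a partial free resolution $\mathcal{F}$ of $\mathbb{Z}$ by finitely generated free $\mathbb{Z} G$-modules up to dimension $m$ (which exists because $G$ is $FP_m$) admits partial chain contractions $s_i : \mathcal{F}_i \to \mathcal{F}_{i+1}$ for $i \leq m-1$ that shift $\chi$-valuations by a uniformly bounded amount --- i.e.\ there is a constant $c$ so that each $s_i$ introduces only group elements whose $\chi$-value is bounded below by $\chi(\text{input}) - c$.

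Next I would prove the implication $[\chi]\in\Sigma^m(G,\mathbb{Z}) \Longrightarrow \operatorname{Tor}_i^{\mathbb{Z} G}(\widehat{\mathbb{Z} G}_\chi,\mathbb{Z}) = 0$ for $i \leq m$. Given such bounded-shift contractions, observe that elements of $\widehat{\mathbb{Z} G}_\chi$ are formal series converging in a natural topology induced by the filtration by $\chi$-value. A bounded-shift contraction on $\mathcal{F}$ extends by continuity to a contraction of $\widehat{\mathbb{Z} G}_\chi \otimes_{\mathbb{Z} G} \mathcal{F}$, because applying $s_i$ termwise preserves convergence of the series: the tail gets pushed further into the convergent region. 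The existence of such a contraction makes the tensored complex contractible through dimension $m$, so the Tor groups vanish in this range.

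For the converse I would argue as follows. Suppose the Tor groups vanish for $i\leq m$, so that $\widehat{\mathbb{Z} G}_\chi\otimes\mathcal{F}$ is acyclic in those dimensions and thus admits chain contractions $\hat{s}_i$ by the usual inductive lifting over a ring. The key step is to approximate these by chain homotopies defined over $\mathbb{Z} G_\chi^+$. I would truncate each Novikov coefficient at a deep $\chi$-threshold to obtain a finite ``principal part'' that yields a partial contraction modulo an error supported arbitrarily deep in the negative $\chi$-direction. An iterative Newton-style successive-approximation procedure, exploiting finite generation of $\mathcal{F}_i$ (so that after truncation only finitely many monomials can appear in any error term) and the strict contracting behaviour of the boundary maps on the filtration, converts these approximate contractions into honest bounded-shift contractions over $\mathbb{Z} G_\chi^+$. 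This supplies the Bieri--Renz criterion and gives $[\chi]\in\Sigma^m(G,\mathbb{Z})$.

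The main obstacle, I expect, is precisely this converse direction: upgrading a contraction over the Novikov ring to a bounded-shift contraction over $\mathbb{Z} G_\chi^+$. Naive truncation does not commute with the differential, and one must carefully balance the approximation order against the growth of error terms in higher dimensions. The argument essentially has to be done by induction on $i \leq m$, repeatedly adjusting the contractions constructed at lower dimensions to preserve compatibility. The other direction and the Bieri--Renz reformulation are comparatively formal; the real technical heart of Sikorav's theorem is this finitization step.
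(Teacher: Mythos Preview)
The paper does not supply its own proof of this theorem: it is quoted as a known result, with attribution to Sikorav \cite{S} for $m=1$ and to Bieri \cite[Thm.~2]{Bieri} (with Schweitzer's appendix) for general $m$. So there is no in-paper argument to compare your proposal against.

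That said, your outline is essentially the standard proof one finds in those references. The Bieri--Renz reformulation of $\Sigma^m(G,\mathbb{Z})$ as ``$\mathbb{Z}$ is $FP_m$ over $\mathbb{Z}G_\chi^+$'', equivalently the existence of a partial chain contraction with bounded $\chi$-shift on a finite-type resolution, is exactly the right starting point. The forward direction is, as you say, formal: a bounded-shift contraction extends termwise to the Novikov completion. For the converse, the usual presentation (due to Schweitzer in the appendix to \cite{Bieri}) is slightly more streamlined than the iterative ``Newton-style'' procedure you describe: one observes that acyclicity of $\widehat{\mathbb{Z}G}_\chi\otimes_{\mathbb{Z}G}\mathcal{F}$ through degree $m$ yields, by projectivity, an honest $\widehat{\mathbb{Z}G}_\chi$-linear contraction; since each $\mathcal{F}_i$ is finitely generated, the images of a basis under this contraction are single Novikov elements, and truncating them at a sufficiently deep $\chi$-level produces maps over $\mathbb{Z}G$ whose deviation from being a contraction is a matrix with entries in $\mathbb{Z}G_{\chi>0}$. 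One then uses that $1+\varepsilon$ is invertible in $\widehat{\mathbb{Z}G}_\chi$ for such $\varepsilon$ (geometric series), together with a standard ``essentially acyclic implies contractible'' argument, to push this to a genuine bounded-shift contraction. So your intuition about the converse being the technical heart is correct, but the finitization step is cleaner than a full iterative scheme: a single sufficiently deep truncation plus an invertibility-of-$(1+\text{small})$ trick suffices.
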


We use the above results to deduce the following corollary.

\begin{cor} Let $G$ be a subgroup of finite index in the generalised Thompson group $F_{n, \infty}$. Then $\Sigma^{\infty}(G, \mathbb{Z}) \not= \emptyset$. In particular for $[\chi] \in \Sigma^{\infty}(G, \mathbb{Z})$ we have $Tor^{\mathbb{Z}G}_i( \widehat{\mathbb{Z} G}_{\chi}, \mathbb{Z}) = 0$ for all $i \geq 0$, hence condition (\ref{*}) holds for $G$  for every $m$.
\end{cor}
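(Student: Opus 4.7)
The plan is to reduce to $H = F_{n,\infty}$ and then exhibit a single $[\chi]$ lying in every $\Sigma^m(H,\mathbb{Z})$. Since $H' = F_{n,\infty}'$ is simple, any finite-index $G \leq H$ contains $H'$, so $G/G' = G/H'$ is a finite-index subgroup of $H/H' = \mathbb{Z}^n$ and is therefore free abelian of rank $n$; restriction gives an $\mathbb{R}$-linear isomorphism $\mathrm{Hom}(H,\mathbb{R}) \to \mathrm{Hom}(G,\mathbb{R})$ and a canonical identification $S(G) = S(H)$. By Meinert's theorem on finite-index subgroups in $\Sigma$-theory (see \cite{BieriRenz}), this identification takes $\Sigma^m(H,\mathbb{Z})$ bijectively onto $\Sigma^m(G,\mathbb{Z})$ for every $m$, so it is enough to produce a single $[\chi] \in \bigcap_m \Sigma^m(H,\mathbb{Z})$.

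I would take $[\chi]$ generic, i.e.\ $[\chi] \in S(H) \setminus \mathrm{conv}_{\leq 2}\{[\chi_1],[\chi_2]\}$, so that $[\chi] \in \Sigma^2(H)$ by Theorem \ref{preli3}. For $n = 2$ the second part of Theorem \ref{preli3} gives $\Sigma^m(F) = \Sigma^2(F)$ for all $m \geq 2$, hence $[\chi] \in \bigcap_m \Sigma^m(F)$; the standard inclusion of the homotopical into the homological invariant in dimension at least two then yields $[\chi] \in \Sigma^\infty(F,\mathbb{Z})$. For $n \geq 3$ I would adapt the Morse-theoretic descending-link analysis of Brown and of \cite{BGK}, \cite{K} to the Brown/Stein finite-type $K(F_{n,\infty},1)$: for the chosen generic $\chi$, the descending links in the $\chi$-filtration of the universal cover are spherical of uniformly bounded dimension, so $\Gamma_{\chi \geq 0}$ is homologically $(m-1)$-connected up to shift for every $m$, placing $[\chi]$ in $\Sigma^m(F_{n,\infty},\mathbb{Z})$ for every $m$.

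With $[\chi] \in \Sigma^\infty(G,\mathbb{Z})$ secured, Sikorav's criterion (the theorem displayed immediately before the corollary) delivers $\mathrm{Tor}^{\mathbb{Z}G}_i(\widehat{\mathbb{Z}G}_\chi, \mathbb{Z}) = 0$ for all $i \geq 0$, and the preceding lemma then converts this Tor-vanishing into inequality (\ref{*}) for arbitrary $m$, which is the stated conclusion.

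The main obstacle is the $n \geq 3$ step of the middle paragraph: Theorem \ref{preli3} only supplies $\Sigma^2$ in that range, and the homotopical equality $\Sigma^m = \Sigma^2$ is explicitly flagged as open in the remark following Theorem C, so the argument cannot simply quote such an equality. Showing by a direct Morse or Novikov-ring computation that one fixed generic $\chi$ lies in every $\Sigma^m(F_{n,\infty},\mathbb{Z})$ for $n \geq 3$ is where essentially all of the real work has to go; the passage from there to condition (\ref{*}) is purely formal.
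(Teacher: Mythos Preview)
Your reduction from $G$ to $H=F_{n,\infty}$ via simplicity of $H'$ and finite-index invariance of $\Sigma^m(\,\cdot\,,\mathbb{Z})$ is exactly what the paper does (the paper attributes the finite-index statement to Bieri--Strebel, published in \cite{Sc}), and the final step through Sikorav's criterion and the preceding lemma is also the same. The divergence is entirely in how you produce a point of $\Sigma^\infty(F_{n,\infty},\mathbb{Z})$.

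For $n=2$ your use of Theorem~\ref{preli3} is legitimate, but for $n\geq 3$ you have correctly identified a genuine gap: you are trying to establish, for a generic $\chi$, membership in every $\Sigma^m(F_{n,\infty},\mathbb{Z})$, and this is precisely the open question (\ref{nnn}) flagged after Theorem~C. Proposing to run a descending-link Morse argument on the Brown/Stein complex is not a proof; that computation for general $n$ and general $m$ has not been carried out and is not routine.

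The paper bypasses this entirely with a much cheaper observation you missed: for a group of type $FP_\infty$, the antipode of any point in $\Sigma^1(\,\cdot\,,\mathbb{Z})^c$ lies in $\Sigma^\infty(\,\cdot\,,\mathbb{Z})$ (cited from \cite[Prop.~4.2]{Meinert} and \cite[Thm.~2.1]{BGK}). Since $\Sigma^1(F_{n,\infty})^c=\{[\chi_1],[\chi_2]\}$ is nonempty, one immediately gets $[-\chi_1]\in\Sigma^\infty(F_{n,\infty},\mathbb{Z})$, for every $n$, without any knowledge of $\Sigma^m$ for $m\geq 2$. The moral is that you do not need a \emph{generic} character; a carefully chosen \emph{special} one (an antipode of a point in $\Sigma^1{}^c$) does the job via a general structural fact, and this is what makes the corollary a two-line argument rather than a research problem.
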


\begin{proof} Since $F_{n, \infty}$ has type $FP_{\infty}$ the points antipodal to the points of  $\Sigma^1(F_{n, \infty}, \mathbb{Z})^c$ are in $\Sigma^{\infty}(F_{n, \infty}, \mathbb{Z})$ \cite[Prop.~4.2]{Meinert}, \cite[Thm.~2.1]{BGK}. Hence $\Sigma^{\infty}( F_{n, \infty}, \mathbb{Z}) \not= \emptyset$. Note that since $G$ and $F_{n, \infty}$ have the same commutator every  homomorphism $\chi : G \to \mathbb{R}$ extends to a homomorphism $F_{n, \infty} \to \mathbb{R}$. Then by a result first obtained by Bieri and Strebel (1987 unpublished) and published in \cite{Sc} $\Sigma^{\infty}(G, \mathbb{Z}) = \Sigma^{\infty}(F_{n, \infty}, \mathbb{Z}) \not= \emptyset$ and we can apply the previous two results. 
\end{proof}

\subsection{Proof of Theorem C} 
Theorem C follows directly from Theorem A2. Alternative approach is to apply Proposition \ref{sigma},
 note that (\ref{sigmacondition}) holds by Theorem \ref{preli3}.

\end{document}